\definecolor{Myblue}{rgb}{0.2,0.2,0.7}
\renewcommand{\ss}{\mathbb S}
\newcommand{\nn}{\mathbb N}
\newcommand{\Lp}{L}
\newcommand{\rr}{\mathbb R}
\newcommand{\cc}{\mathbb C}
\newcommand{\B}{\mathcal B}
\newcommand{\C}{\mathcal C}
\renewcommand{\H}{\mathcal H}
\newcommand{\K}{\mathcal K}
\newcommand{\U}{\mathcal U}
\newcommand{\Z}{\mathcal Z}
\newcommand{\N}{\mathcal N}
\newcommand{\V}{\mathcal V}
\renewcommand{\L}{\mathcal L}
\renewcommand{\O}{\mathcal O}
\newcommand{\ptvn}{\overline{\otimes}}
\newcommand{\pta}{\odot}
\let\implies\Rightarrow
\let\epsilon\varepsilon
\DeclareMathOperator{\Hom}{Hom}
\DeclareMathOperator{\id}{id}
\DeclareMathOperator{\Aut}{Aut}
\DeclareMathOperator{\Inn}{Inn}
\DeclareMathOperator{\Out}{Out}
\DeclareMathOperator{\Ad}{Ad}
\DeclareMathOperator{\II}{II}
\DeclareMathOperator{\III}{III}
\DeclareMathOperator{\Bimod}{Bimod}
\DeclareMathOperator*{\esssup}{ess\,sup}
\DeclareMathOperator*{\End}{End}
\DeclareMathOperator*{\essim}{essim}
\DeclareMathOperator*{\op}{op}
\renewcommand{\phi}{\varphi}
\renewcommand{\epsilon}{\varepsilon}
\newcommand{\abs}[1]{\left \lvert #1 \right \rvert}
\newcommand{\norm}[1]{\left \Vert #1 \right \Vert}
\newcommand{\ps}[2]{\langle #1,#2 \rangle}
\newcommand{\equal}[1]{\underset{#1}{=}}
\newcommand{\tend}[3][]{\displaystyle\mathop{\overset{#1}{\longrightarrow}}_{#2\rightarrow#3}} 
\newcommand{\acts}{\curvearrowright}
\newcommand{\sjw}{false}
\newcommand{\ei}[2]
    {
        \ifthenelse{\equal{\sjw}{true}}{\kern-.25em #1}{\kern-.25em #2}
    }
\newcommand{\linf}{\Lp^{\infty}}
\newcommand{\ldeux}{\Lp^{2}}
\newtheoremstyle{withtitle}
  {3pt}
  {3pt}
  {\itshape}
  {}
  {\bfseries}
  {.}
  {.5em}
  {\thmname{#1}\thmnumber{ #2}\thmnote{ (#3)}}
  \theoremstyle{withtitle}
\newtheorem{The}{Theorem}[section]
\newtheorem{Lem}[The]{Lemma}
\newtheorem{Prop}[The]{Proposition}
\newtheorem{Ques}{Question}
\newtheorem{Prob}{Problem}
\newtheorem{mainthm}{Theorem}
\newtheorem{maincor}[mainthm]{Corollary}
\newtheorem{refthm}{Theorem}
\numberwithin{equation}{section}
\newcommand{\bimod}[3]{(#2,\pi_{#2})}
\newcommand{\MNbimod}[1]{\bimod{M}{#1}{N}}
\newcommand\lowoverline[2][a]{\ensuremath\overline{{\smash{#2}\vphantom{#1}}}}
\begin{document}

\title{Strictly outer actions of locally compact groups: beyond the full factor case}
    \author{Basile Morando}
    \date{\today}
    \maketitle
\begin{abstract}
 We show that, given a continuous action $\alpha$ of a locally compact group $G$ on a factor $M$, the relative commutant $M'\cap(M\rtimes_{\alpha} G)$ is contained in $M\rtimes_{\alpha} H$ where $H$ is the subgroup of elements acting without spectral gap. As a corollary, we answer a question of Marrakchi and Vaes by showing that if $M$ is semifinite and $\alpha_g$ is not approximately inner for all $g\neq 1$, then $M'\cap (M\rtimes_{\alpha} G)=\cc$.
\end{abstract}

\section{Introduction and results}

Given a continuous action $\alpha:G\acts M$ of a locally compact group $G$ on a von Neumann algebra $M$, we can build the \emph{crossed product} $M\rtimes_{\alpha} G$: this algebra is generated by a copy of $M$ and a unitary representation of $G$ such that, given $u_{g}$ in the representation of $G$ and $x$ in the copy of $M$, we have $u_gxu_g^*=\alpha_g(x)$. Understanding how properties of the action reflect into properties of the algebra $M\rtimes_{\alpha} G$ and of the inclusion $M\subset M\rtimes_{\alpha} G$ is a fundamental question in the theory of non-commutative dynamical systems. In this article, we focus on the following question: 
\begin{Prob}\label{prob:problem 1}
 Find a criterion on $\alpha:G\acts M$ a continuous action of a locally compact group on a factor $M$, ensuring that $M'\cap(M\rtimes_{\alpha} G)$ is trivial. Such an action is said to be \emph{strictly outer}, and this property trivially implies factoriality of $M\rtimes_{\alpha}G$.
\end{Prob}

When the group $G$ is discrete, many properties of the crossed product can be deduced from the \emph{Fourier decomposition}: every element of $M\rtimes_{\alpha} G$ is uniquely written as a sum $\sum_{\gamma\in G}x_{\gamma}u_{\gamma}$ (where $x_{\gamma}$ and $u_{\gamma}$ respectively refers to elements of the copy of $M$ and $G$ inside $M\rtimes_{\alpha} G$). It allows in particular to answer problem~\ref{prob:problem 1}.
\begin{itemize}
    \item When $M=\cc$, $\cc\rtimes G$ is just the group von Neumann algebra $L(G)$. This algebra is a factor if and only if $G$ has infinite non-trivial conjugacy classes.
    \item When $M$ is a factor, the relative commutant $M'\cap (M\rtimes_{\alpha} G)$ is trivial if and only if $\alpha$ is \emph{outer}: for all $\gamma\neq 1$ in $G$, there is no unitary $u\in M$ such that $\alpha_{\gamma}=\Ad(u)$. 
    \item More generally, consider the subgroup $H=\{\gamma\in G \mid \alpha_{\gamma} \text{ is inner} \}$. Then $M'\cap (M\rtimes_{\alpha}G)\subset M\rtimes_{\alpha}H$. When $M$ is a type $\II_1$ factor, it implies that $M\rtimes_{\alpha} G$ is a factor if and only if the subgroup $H$ of inner-acting elements of $G$ is $G$-icc (for every $h\in H\setminus \{1\}$, the set $\{\gamma h \gamma^{-1}\mid \gamma\in G\}$ is infinite). 
\end{itemize}
In the non-discrete case, these questions become much more subtle: 
\begin{itemize}
    \item Even in the simplest case where $M=\cc$, we do not have any criterion for factoriality of $\cc\rtimes G=L(G)$.
    \item When $M$ is a factor, it is still true that $M'\cap(M\rtimes_{\alpha} G)=\cc$ implies that $\alpha$ is outer. However, the converse is no longer true. For example, if $M$ is a factor of type $\III_0$ with trivial $T$-invariant, any action $\sigma_t^{\phi}:\rr^{+*}\acts M$ by modular automorphisms is outer, but $M\rtimes_{\sigma_t^{\phi}}\rr^{+*}$ is not even a factor. As explained in \cite[Example 8]{marrakchivaesSpectral}, there also exist outer actions $\alpha:\ss^1\acts R$ of $\ss^1$ on the hyperfinite $\II_1$ factor such that $R\rtimes_{\alpha}\ss^1$ is not a factor either.  
    \item Consider $(M,\tau)$ a $\II_{\infty}$ factor, $\tau$ being a semifinite faithful trace on $M$, and $\alpha:\rr^{+*}\acts M$ a continuous action. Assume that $\alpha$ scales the trace, meaning that for all $s\in \rr^{+*}$ and $x\in M$, $\tau(\alpha_s(x))=s^{-1}\tau(x)$. It trivially implies that $\alpha$ is outer, but the fact that $M'\cap (M\rtimes_{\alpha}\rr^{+*})=\cc$  is a deep result of modular theory, the \emph{Connes-Takesaki relative commutant theorem} \cite[Theorem 5.1]{connestakesaki77} (see also \cite[XII.1.7 and XII.1.1]{takesakiTheoryII}).
\end{itemize}

In \cite{marrakchivaesSpectral}, Marrakchi and Vaes were able to completely solve Problem~\ref{prob:problem 1} for \emph{full} factors (in the $\II_1$ setting, factors that do not have property $\Gamma$).
\begin{The}[{\textmd{\cite[Theorem A]{marrakchivaesSpectral}}}]\label{the:marrakchivaes outer implies strictly outer}
 Let $\alpha:G\acts M$ be a continuous action of a locally compact group on a full factor with separable predual. Assume that $\alpha$ is outer. Then $M'\cap(M\rtimes_{\alpha} G)=\cc$.
\end{The}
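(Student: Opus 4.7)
Let $x \in M' \cap (M \rtimes_\alpha G)$; the goal is to show $x \in \cc$. The overall strategy is to perform a Fourier-type decomposition of $x$ over $G$ and then use fullness of $M$ combined with outerness of $\alpha$ to force the decomposition to be concentrated on the identity. Concretely, I would realize $M \rtimes_\alpha G$ in its standard representation on $L^2(M) \otimes L^2(G)$ and approximate $x$ in the strong operator topology by elements of the form $\int_G f(g) u_g\, \dd g$ with $f \colon G \to M$ strongly continuous and compactly supported; equivalently, one can work with the dual coaction $\hat\alpha$ of $G$ on $M \rtimes_\alpha G$ and its spectral subspaces, which generalize the discrete Fourier coefficients $x_g$.

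The commutation $[x,y]=0$ for $y \in M$ gives, in the discrete model, $y x_g = x_g \alpha_g(y)$, meaning each Fourier coefficient $x_g$ intertwines $\id$ with $\alpha_g$ as a bimodule map. Testing this relation against vector states and regularizing by convolution with approximate identities on $G$ yields the following continuous analogue: the \emph{off-diagonal} content of $x$, i.e.\ the piece living outside any prescribed neighbourhood of $e$, produces bounded nets $(a_i) \subset M$ and $(g_i) \subset G \setminus \{e\}$, with $\liminf_i \|a_i\|_2 > 0$ in the $L^2$-norm associated to a fixed faithful normal state on $M$, such that $\|a_i y - \alpha_{g_i}(y) a_i\| \to 0$ strongly for every $y \in M$. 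This is where fullness enters: for a full factor with separable predual, the spectral gap property precisely rules out bounded approximate $\alpha_{g_i}$-intertwiners unless $\alpha_{g_i}$ is eventually inner, so that any limit point $g$ of $(g_i)$ satisfies $\alpha_g \in \Inn(M)$. The outerness hypothesis then forces $g = e$, contradicting $(g_i) \subset G \setminus \{e\}$ (bounded away from $e$). Hence all off-diagonal contributions vanish, $x \in M$, and since $M$ is a factor, $x \in M' \cap M = \cc$.

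The main obstacle is making the notion of "off-diagonal" content of $x$ rigorous when $G$ is neither discrete nor abelian. In the abelian case one can use the dual action of $\hat G$ and extract spectral components directly; in general one has to work with the dual coaction on the Hopf--von Neumann algebra $L(G)$ and extract a non-trivial approximate intertwiner from the commutator relation while keeping the $L^2$-masses uniformly bounded below. This technical core --- upgrading a non-trivial off-diagonal part of $x$ to an honest bounded net of approximate intertwiners with prescribed "frequencies" $g_i$ --- is the heart of the Marrakchi--Vaes argument; and it is precisely what the present paper refines, by isolating within $G$ the closed subgroup of elements acting without spectral gap rather than requiring fullness of $M$.
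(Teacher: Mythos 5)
There is a genuine gap, and you have in fact flagged it yourself: the step that turns a non-trivial ``off-diagonal'' part of $x$ into a bounded net $(a_i)\subset M$, $(g_i)\subset G\setminus\{e\}$ of approximate $\alpha_{g_i}$-intertwiners with $\liminf_i\|a_i\|_2>0$ is exactly the hard part, and your proposal does not supply it. For non-discrete $G$ there are no Fourier coefficients $x_g$ and no normal conditional expectation onto ``spectral components'' of the dual coaction that would make the off-diagonal content a bounded operator to test against vector states; asserting that regularization by approximate identities produces such a net is precisely the content one has to prove, not a routine reduction.

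Moreover, even granting that extraction, your concluding argument does not close. You pass to a limit point $g$ of $(g_i)$ and invoke fullness to get $\alpha_g\in\Inn(M)$; but for a non-compact group a net bounded away from $e$ need have no accumulation point at all, and fullness applied to one automorphism at a time (spectral gap for each fixed $\alpha_{g_i}$, with constants depending on $g_i$) gives nothing in the limit. What is actually needed is a spectral gap that is \emph{uniform} over all group elements outside a neighbourhood of the identity, i.e.\ property \hyperref[P2]{P2}: this is Jones' lemma, extended to arbitrary full $\sigma$-finite factors in Theorem~\ref{prop:uniform spectral gap marrakchivaes}, and it rests on Connes' spectral gap characterization of fullness rather than on any Fourier decomposition. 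The Marrakchi--Vaes proof then combines this uniform gap with a localization of the relative commutant as in Proposition~\ref{prop:proof of thm A marrakchi vaes} (working with the bimodule $\ldeux(\alpha)$ and states on it, in the spirit of Proposition~\ref{prop:equivalent_caracterisation_weak_containment}), which handles all ``frequencies'' outside a neighbourhood of $e$ simultaneously and never needs accumulation points of $(g_i)$. Without the uniformity input and without a rigorous substitute for the Fourier coefficients, your outline does not constitute a proof.
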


Recall that a factor $M$ with separable predual is full if and only if the subgroup $\Inn(M)$ of inner automorphisms is closed in the Polish group $\Aut(M)$ \cite{Connes_almost_periodic_1974}. Therefore, any outer automorphism $\theta$ of a full factor has formally stronger properties:
\begin{itemize}
    \item $\theta$ is not in the closure $\overline{\Inn}(M)$ of $\Inn(M)$ in the topological group $\Aut(M)$ (it is not approximately inner). 
    \item The bimodule $\ldeux(\theta)$ does not weakly contain the standard bimodule $\ldeux(\id)$ by \cite[Theorem 3.1]{connes76} and \cite[Proposition 7.2]{marrakchi2020full}.
 If $M$ is a $\II_1$ factor, this is equivalent to the following property: there exists $\epsilon>0$ and $a_1,...,a_n\in M$ such that for all $b\in M$, 
            \[ \sum_i\norm{\theta(a_i)b-ba_i}_2^2\geq \epsilon \norm{b}_2^2.\]
\end{itemize}
In the non-full setting, the properties above define distinct refinements of outerness. We therefore need to introduce corresponding terminologies: for $\theta \in \Aut(M)$ we say that

    \begin{center}
        \renewcommand{\arraystretch}{1.3}
    \begin{tabular}[h!]{|l|c|}
        \hline
        \bf{Terminology}&\bf{Definition}\\\hline\hline
        $\theta$ is inner & $\theta\in \Inn(M)$\\\hline
        $\theta$ is outer & $\theta\notin \Inn(M)$\\\hline
        $\theta$ is topologically (or approximately) inner & $\theta\in \lowoverline[\Inn]{\Inn}(M)$\\\hline
        $\theta$ is topologically outer & $\theta\not\in \lowoverline[\Inn]{\Inn}(M)$\\\hline
        $\theta$ is spectrally (or weakly) inner & $\ldeux(\id)\prec \ldeux(\theta)$\\\hline
        $\theta$ is spectrally outer (or $\theta$ acts with spectral gap) & $\ldeux(\id)\nprec\ldeux(\theta)$ \\\hline
    \end{tabular}
    \end{center}

    We recall the known implications between the various notions of outerness introduced above. The missing implications are known not to hold.
    \begin{center}
        \renewcommand{\arraystretch}{1.3}
    \begin{tabular}[h!]{|l|c|}
        \hline
        $M$ factor              & $\theta$ spectrally outer $\Longrightarrow$ $\theta$ topologically outer $\Longrightarrow$ $\theta$ outer\\
        $M$ $\II_1$ factor      & $\theta$ spectrally outer $\stackrel{\text{\cite{connes76}}}{\Longleftrightarrow}$ $\theta$ topologically outer $\Longrightarrow$ $\theta$ outer \\
        $M$ full factor         & $\theta$ spectrally outer $\stackrel{\text{\cite{connes76,marrakchi2020full}}}{\Longleftrightarrow}$ $\theta$ topologically outer $\Longleftrightarrow$ $\theta$ outer\\
        \hline
    \end{tabular}
    \end{center}

 When $\alpha:G\acts M$ is a continuous action on a factor, we say that $\alpha$ is inner (resp. outer, topologically inner, topologically outer, spectrally inner, spectrally outer) if for all $g\in G\setminus\{1\}$, $\alpha_g$ is inner (resp. outer, topologically inner, topologically outer, spectrally inner, spectrally outer). 
    
 As pointed out before, Theorem~\ref{the:marrakchivaes outer implies strictly outer} does not hold for non-full factors. Nevertheless, Marrakchi and Vaes ask in \cite{marrakchivaesSpectral} for the following natural generalization: 

    \begin{Ques}[{\textmd{\cite[Question 9]{marrakchivaesSpectral}}}]\label{ques:topologically outer implies strictly outer}
 Let $\alpha:G\acts M$ be a continuous action of a locally compact group on a factor. Assume that for all $g\in G\setminus\{1\}$, $\alpha_g\notin \overline{\Inn}(M)$ (i.e. $\alpha$ is topologically outer). Do we have $M'\cap(M\rtimes_{\alpha} G)=\cc$?
    \end{Ques}

In this paper, we solve this question at least when $M$ is a semifinite factor. This is a consequence of the following main theorem.

\begin{mainthm}\label{thm:location of relative commutant in spectrally inner automorphisms}
 Let $\alpha:G\acts M$ be an action of a locally compact group on a $\sigma$-finite factor. Let $H=\{g\in G\mid \ldeux(\id)\prec \ldeux(\alpha_g) \}< G$ be the subgroup of elements acting without spectral gap. Then  
    \[ M'\cap(M\rtimes_{\alpha} G)\subset M\rtimes_{\alpha} H.\]
    \end{mainthm}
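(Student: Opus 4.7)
The strategy is to combine a direct integral decomposition of $L^2(M\rtimes_\alpha G)$ as an $M$-$M$-bimodule with a localized spectral-gap argument, generalizing the Marrakchi--Vaes approach from the full-factor case.

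Set $N:=M\rtimes_\alpha G$ and realize it in its standard form on $L^2(N)$, identified (via the dual weight of a faithful normal state on $M$) with $L^2(G)\otimes L^2(M)$. Under this identification $M$ acts by the covariant representation $(\pi(y)\xi)(g)=\alpha_g^{-1}(y)\xi(g)$, and the resulting $M$-$M$-bimodule structure on $L^2(N)$ (left action from $M\subset N$, right action from $JMJ\subset N'$) decomposes as a direct integral
\[
L^2(N)\;\simeq\;\int^{\oplus}_G L^2(\alpha_g)\,dg
\]
over Haar measure on $G$. An element $x\in M'\cap N$ commutes with both the left $M$-action (by assumption) and the right $JMJ$-action (since $x\in N$), so it acts on $L^2(N)$ as an $M$-$M$-bimodule endomorphism and therefore disintegrates as a measurable field $(x_g)_{g\in G}$ of endomorphisms of $L^2(\alpha_g)$.

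For each $g_0\notin H$, spectral-outerness of $\alpha_{g_0}$ produces $a_1,\ldots,a_n\in M$ and $\epsilon>0$ such that $\sum_i\|a_i\xi-\xi\alpha_{g_0}(a_i)\|^2\geq\epsilon\|\xi\|^2$ for every $\xi\in L^2(\alpha_{g_0})$, the commutators being taken in the bimodule sense. By strong continuity of $\alpha$, this inequality persists (with constant $\epsilon/2$) on an open neighborhood $U$ of $g_0$. For $f$ in the Fourier algebra $A(G)$ with $\supp f\subset U$, consider the Fourier multiplier $T_f:N\to N$ determined by $T_f(y)=y$ for $y\in M$ and $T_f(\lambda(g))=f(g)\lambda(g)$. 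This is a completely bounded $M$-bimodule map whose image in $L^2(N)$ lies in $\int^{\oplus}_U L^2(\alpha_g)\,dg$. Integrating the fiberwise inequality over $U$ yields $\sum_i\|[a_i,z]\|^2\geq(\epsilon/2)\|z\|^2$ for any $z$ in this image. Applied to $x\in M'\cap N$, one has $T_f(x)\in M'\cap N$ (since $T_f$ is an $M$-bimodule map), so $[a_i,T_f(x)]=0$, and combined with the inequality this forces $T_f(x)=0$. Covering $G\setminus H$ by such neighborhoods $U$ and using that $M\rtimes_\alpha H$ equals the intersection of kernels $\ker T_f$ over $f\in A(G)$ vanishing on $H$ (which is consistent with the convention that $M\rtimes_\alpha H$ denotes the von Neumann algebra generated by $M$ and $\lambda(H)$, since $\lambda(H)''=\lambda(\overline{H})''$), we conclude $x\in M\rtimes_\alpha H$.

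The main obstacle will be step two: since $\alpha$ is only strong-$*$ continuous and not norm-continuous, propagating the spectral gap from the single point $g_0$ to a whole neighborhood $U$ \emph{uniformly in $\xi$ across fibers} requires genuine care. One likely needs either to establish stability of the spectral-gap condition via a measurable-selection argument inside the direct integral of bimodules, or to appeal to the fact that "$\ldeux(\id)\nprec\ldeux(\alpha_g)$" is a Fell-topologically open condition to produce the open parameter neighborhood directly. A second, more bookkeeping-style subtlety is the precise identification of the image of $T_f$ with the subspace $\int^{\oplus}_U L^2(\alpha_g)\,dg$ and the transport of the integrated inequality back to an estimate on $T_f(x)$ as an element of $N$, which must be done carefully since the dual weight on $N$ is in general unbounded.
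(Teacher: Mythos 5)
There is a genuine gap, and it sits exactly at the step you yourself flag as ``the main obstacle.'' You claim that, given $g_0\notin H$ and a spectral gap witnessed by $a_1,\dots,a_n$ and $\epsilon$, ``by strong continuity of $\alpha$ this inequality persists (with constant $\epsilon/2$) on an open neighborhood $U$ of $g_0$.'' Strong continuity only gives, for each \emph{fixed} vector $\xi$, a neighborhood (depending on $\xi$) on which $\norm{\alpha_h^{-1}(a_i)\xi-\xi a_i}$ is close to $\norm{\alpha_{g_0}^{-1}(a_i)\xi-\xi a_i}$; since $h\mapsto \alpha_h^{-1}(a_i)$ is not norm continuous, no uniformity over the unit ball of $\ldeux M$ follows. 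What your argument needs (to integrate the fiberwise inequality over $U$) is precisely $\ldeux(\id)\nprec\ldeux(\alpha_{\mid U})$ with the \emph{same} $a_i$ and $\epsilon$ for all $h\in U$, i.e.\ the uniform spectral gap statement which is Theorem~B of the paper. Even its weaker consequence, openness of $\{g:\ldeux(\id)\nprec\ldeux(\alpha_g)\}$, would not suffice, since each $h\in U$ could have its own gap while $\bigoplus_{h\in U}\ldeux(\alpha_h)$ still weakly contains $\ldeux(\id)$. For full factors this uniformity is Jones' lemma, but for general factors no automorphism-group level statement is available; the paper has to prove Theorem~B by a genuinely new argument (smoothing well-chosen states on $B(\ldeux M)$ via the group structure), and your proposal neither supplies this nor reduces to a known result, so the core of the proof is missing.

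Two further points would also need repair even granting the uniform gap. First, your assertion that $x\in M'\cap N$ ``disintegrates as a measurable field $(x_g)$ of endomorphisms of $\ldeux(\alpha_g)$'' is false in general: bimodule endomorphisms of $\int^{\oplus}_G\ldeux(\alpha_g)\,dg$ need not commute with the diagonal algebra $1\ptvn\linf(G)$ (an inner-acting $u_g$ perturbed by a unitary of $M$ shifts fibers rather than acting fiberwise); commutation with $\linf(G/H)$ is essentially what the theorem is to prove, and the paper obtains it by showing $1\ptvn\linf(G/H)\subset\Z(\End(\ldeux(\alpha)))$ and invoking a localization lemma based on \cite[Prop.~2.2]{boutonnet2016crossedproducts}, rather than by disintegrating $x$. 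Second, your $\epsilon$--$a_i$ formulation of spectral gap with $\norm{\cdot}_2$ estimates is the tracial ($\II_1$) characterization, whereas the theorem concerns arbitrary $\sigma$-finite factors (including type $\III$); relatedly, identifying $[a_i,T_f(x)]=0$ in $N$ with vanishing of the bimodule commutator of the vector $\widehat{T_f(x)}$ requires $T_f(x)$ to be square-integrable for the (unbounded) dual weight and involves the modular right action, so this is more than bookkeeping. Finally, $M\rtimes_{\alpha}H=\bigcap_f\ker T_f$ presupposes $H$ closed, which itself only follows once the openness/uniform gap result is in place. The overall localization strategy is in the same spirit as the paper's, but the decisive ingredient (Theorem~B) is assumed rather than proved.
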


This theorem does not answer Question~\ref{ques:topologically outer implies strictly outer} when $M$ is a type $\III$ factor because not every topologically outer action on a type $\III$ factor is spectrally outer. However, this holds true when $M$ is of type $\II_1$, which implies the following corollary:

\begin{maincor}\label{thm:top outer action are str outer}
 Let $\alpha:G\acts M$ be a continuous action of a locally compact group on a $\sigma$-finite type $\II$ factor. If $\alpha$ is topologically outer (i.e. for all $g\in G\setminus \{1\}$, $\alpha_g\notin \overline{\Inn}(M)$), then $M'\cap (M\rtimes_{\alpha} G)=\cc$.
\end{maincor}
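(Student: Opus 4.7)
The plan is to apply Theorem~\ref{thm:location of relative commutant in spectrally inner automorphisms} and then show that the subgroup of spectrally inner elements is trivial. The main theorem gives
\[ M' \cap (M \rtimes_\alpha G) \subset M \rtimes_\alpha H, \qquad H = \{g \in G \mid \ldeux(\id) \prec \ldeux(\alpha_g)\}, \]
so it is enough to prove $H = \{1\}$; indeed, one then has $M' \cap (M \rtimes_\alpha G) \subset M' \cap M = \cc$ by factoriality of $M$.

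The subgroup $H$ will be identified with the subgroup of topologically inner elements, via the key feature of type $\II$ factors recalled in the table above: an automorphism $\theta$ is spectrally inner if and only if it is topologically inner. In the $\II_1$ case, this is Connes' classical theorem \cite{connes76}. For the $\II_\infty$ case, I would reduce to $\II_1$ by compression: spectral innerness of $\alpha_g$ should first be shown to force preservation of the semifinite trace on $M$ (otherwise the trace-scaling modulus obstructs weak containment of $\ldeux(\id)$ in $\ldeux(\alpha_g)$). After an inner perturbation, $\alpha_g$ may be assumed to fix some finite projection $p \in M$, and its restriction to the $\II_1$ factor $pMp$ is still spectrally inner, hence topologically inner by Connes; spreading this approximation back to $M$ through the standard correspondence between $\Aut(M)$ and $\Aut(pMp)$ yields topological innerness of $\alpha_g$ itself. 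Combined with the topologically outer hypothesis on $\alpha$, this forces $H = \{1\}$.

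The main obstacle I anticipate is precisely the $\II_\infty$-to-$\II_1$ reduction. Carefully verifying that spectral innerness of an automorphism of a semifinite factor forces trace-preservation, and that topological innerness lifts back from the corner $pMp$ to the full factor $M$, is where most of the technical effort will go; once this is in place, the rest of the argument is a direct application of Theorem~\ref{thm:location of relative commutant in spectrally inner automorphisms} and Connes' classical result.
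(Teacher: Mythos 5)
Your reduction works in the $\II_1$ case (there $H=\{1\}$ follows from Connes' theorem, and factoriality of $M$ finishes), but the $\II_\infty$ half collapses at its very first step: spectral innerness of $\alpha_g$ does \emph{not} force preservation of the semifinite trace. The paper records the counterexample explicitly: trace-scaling automorphisms of the injective $\II_\infty$ factor are topologically outer yet \emph{not} spectrally outer, i.e.\ $\ldeux(\id)\prec\ldeux(\alpha_g)$ even though $\alpha_g$ scales the trace. The equivalence ``spectrally inner $\iff$ topologically inner'' you invoke is stated only for $\II_1$ factors; Theorem~\ref{the:spectral gap caracterization of topological outerness} extends the implication ``topologically outer $\Rightarrow$ spectrally outer'' to type $\II$ factors only for \emph{trace-preserving} automorphisms. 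Hence for a topologically outer action on a $\II_\infty$ factor the subgroup $H$ of spectrally inner elements can a priori be nontrivial, and your conclusion $H=\{1\}$ is unjustified (and unprovable in general). Your compression/patching argument for the trace-preserving case is fine, but it is essentially the paper's own proof of the type $\II$ part of Theorem~\ref{the:spectral gap caracterization of topological outerness}, not the missing piece.

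The paper's proof diverges exactly at this point: after applying Theorem~\ref{thm:location of relative commutant in spectrally inner automorphisms}, it does not attempt to show $H=\{1\}$. Instead it uses Theorem~\ref{the:spectral gap caracterization of topological outerness} to conclude that every nontrivial element of $H$ acts by a trace-scaling (non-trace-preserving) automorphism, and then invokes the Connes--Takesaki relative commutant theorem to obtain $M'\cap(M\rtimes_{\alpha}H)=\cc$ directly. To repair your argument you would need this additional deep input (or some substitute handling trace-scaling spectrally inner automorphisms); without it, the $\II_\infty$ case of the corollary is not proved.
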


If $\alpha$ is a trace scaling action on a $\II_{\infty}$ factor, it is topologically outer. Therefore, Corollary~\ref{thm:top outer action are str outer} generalizes Connes-Takesaki's relative commutant theorem. However, our proof of this corollary uses Connes-Takesaki's result, and therefore we do not get a new proof of this deep result. 

To establish Theorem~\ref{thm:location of relative commutant in spectrally inner automorphisms} the key step is the following \emph{uniform spectral gap for group actions} result: 

\begin{mainthm}\label{thm:fundamental result on continuity of weak containment}
 Let $\alpha:G\acts M$ be a continuous action of a locally compact group on a $\sigma$-finite factor.  
 Let $g\in G$ such that $\ldeux(\id)\nprec \ldeux(\alpha_g)$. Then there exists an open neighborhood $\O$ of $g$ in $G$ such that $\ldeux(\id)\nprec \bigoplus_{h\in \O}\ldeux(\alpha_h)$. 
\end{mainthm}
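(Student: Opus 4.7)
The plan is to argue by contradiction. Suppose $\ldeux(\id)\nprec\ldeux(\alpha_g)$ but that, for every open neighborhood $\O$ of $g$, we have $\ldeux(\id)\prec\bigoplus_{h\in\O}\ldeux(\alpha_h)$. Unpacking this failure of uniform spectral gap and applying a weighted pigeonhole argument to the approximately invariant vectors in the direct-sum bimodule (i.e.\ noticing that if $(\xi_h)_{h\in\O}$ has $\sum\|\xi_h\|^2=1$ and total defect $<\epsilon$, then by Markov some $h^*$ achieves defect $\le 2\epsilon$ for the normalized component), we extract a net of pairs $(h_i,\xi_i)$ with $h_i\to g$ in $G$ and $\xi_i\in\ldeux(M)$ a unit vector such that $\|a\xi_i-\xi_i\alpha_{h_i}(a)\|_2\to 0$ for every $a$ in a fixed countable dense subset of $M$. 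The strategy is then to transform these data into a net witnessing $\ldeux(\id)\prec\ldeux(\alpha_g)$, contradicting the hypothesis.

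The key device is the canonical (Haagerup) unitary implementation $h\mapsto u_h$ of $\alpha$ on the standard Hilbert space $\ldeux(M,\varphi)$: the map is strongly continuous and satisfies $u_hL_au_h^*=L_{\alpha_h(a)}$ and $u_hR_au_h^*=R_{\alpha_h(a)}$ for all $a\in M$. Setting $\zeta_i:=u_{gh_i^{-1}}\xi_i$, which remains of unit norm, a direct computation using these intertwining relations yields, for each $a\in M$,
\[\|a\zeta_i-\zeta_i\alpha_g(a)\|_2\leq\|a\xi_i-\xi_i\alpha_{h_i}(a)\|_2+\|(\alpha_{k_i}(a)-a)\xi_i\|_2,\]
where $k_i:=h_ig^{-1}\to e$. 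The first summand vanishes along the net by construction, so the contradiction with $\ldeux(\id)\nprec\ldeux(\alpha_g)$ would follow at once from the vanishing of the second summand for each fixed $a\in M$.

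The hard part is precisely this control of $\|(\alpha_{k_i}(a)-a)\xi_i\|_2$: $\alpha_{k_i}(a)-a$ converges to $0$ only in the $\sigma$-strong-$*$ topology while $\xi_i$ is a bounded but varying net, so no operator-norm bound helps. Writing the square as $\omega_{\xi_i}^\ell((\alpha_{k_i}(a)-a)^*(\alpha_{k_i}(a)-a))$, with $\omega_{\xi_i}^\ell:=\langle\,\cdot\,\xi_i,\xi_i\rangle$, the task reduces to establishing the $M_*$-norm estimate $\|\omega_{\xi_i}^\ell\circ\alpha_{k_i}-\omega_{\xi_i}^\ell\|_{M_*}\to 0$. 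I plan to derive this by combining three ingredients: a reduction placing $\xi_i$ in the positive cone $\mathcal{P}^\natural\subset\ldeux(M,\varphi)$ via a standard doubling construction (so that $\omega_{\xi_i}^\ell$ is a normal state and coincides with $\omega_{\xi_i}^r$); the asymptotic $\alpha_{h_i}$-invariance of $\omega_{\xi_i}^\ell$ forced by the approximate invariance of $\xi_i$, which via weak-$*$ precompactness and $u$-continuity of $\alpha$ produces an $\alpha_g$-invariant cluster point $\omega$; and a uniform Sakai--Radon--Nikodym domination $\omega_{\xi_i}^\ell\leq C\varphi$ on large projections $e_i\in M$ with $\omega_{\xi_i}^\ell(1-e_i)\to 0$, which transfers the norm continuity $\|\varphi\circ\alpha_{k_i}-\varphi\|_{M_*}\to 0$ to the varying family. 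The most delicate technical point will be realizing the positive-cone reduction and the uniform absolute continuity simultaneously without destroying the approximate invariance property.
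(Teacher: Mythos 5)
There is a genuine gap, and it sits exactly where you flag it. Two issues. First, your extraction step is calibrated to the tracial case: for a general $\sigma$-finite (possibly type $\III$) factor, $\ldeux(\id)\prec\K$ is not equivalent to the existence of unit vectors with vanishing commutator defect. By Proposition~\ref{prop:equivalent_caracterisation_weak_containment}, what weak containment gives (and what you must eventually produce for $\alpha_g$) is a state on $B(\K)$ reproducing the binormal state $a\otimes b^{\op}\mapsto\ps{a\phi^{1/2}b}{\phi^{1/2}}$; since $\phi^{1/2}$ is not a central vector when $M$ is not tracial, ``defect $\norm{a\xi_i-\xi_i\alpha_{h_i}(a)}\to 0$'' is neither what the hypothesis yields nor what suffices for the conclusion. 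The almost-central-vector picture is the $\II_1$ characterization (Proposition~\ref{prop:spectral gap for Borel family of automorphisms}); moreover your Markov/pigeonhole step only controls the defect, not the closeness of the normalized component's vector state to the standard state, so even in the tracial case the single $h^*$ you select does not automatically witness anything beyond small defect.

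Second, and more seriously, the estimate $\norm{(\alpha_{k_i}(a)-a)\xi_i}\to 0$ is precisely the uniform spectral gap problem the theorem is about, and the mechanism you propose for it is not available: the $\xi_i$ come from an abstract weak containment, so nothing forces a uniform Sakai--Radon--Nikodym domination $\omega_{\xi_i}\le C\phi$ on large projections with a constant independent of $i$. For non-full factors, almost central vectors typically concentrate on projections of arbitrarily small $\phi$-mass (central sequences), so no such uniform $C$ exists in general; and an $\alpha_g$-invariant weak-$*$ cluster point of the $\omega_{\xi_i}$ may be singular, for which norm continuity of $k\mapsto\omega\circ\alpha_k$ fails, so invariance of the limit cannot be transferred back to the varying net. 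Thus the proposal reduces the theorem to an unproved and, in this generality, implausible claim. The paper's proof avoids the issue by never asking the witnesses to be dominated or to vary continuously: it compresses the states witnessing $\ldeux(\id)\prec\ldeux(\alpha_{\mid\O})$ to $B(\ldeux M)$, takes a weak-$*$ limit $\tilde\omega$, and observes that $\tilde\omega\circ\pi_{\alpha_g}$ agrees with the standard state on $M_{\alpha}\odot M^{\op}$ because for $a\in M_{\alpha}$ the map $h\mapsto\alpha_h^{-1}(a)$ is \emph{norm} continuous (this replaces your missing estimate, by smoothing the algebra element instead of the state's density); it then smooths the limit state by averaging over $\tilde{\alpha}_{\V}$ for small neighborhoods $\V$ of $1$, and invokes the prolongation Lemma~\ref{lem:prolongation result for states} (agreement on a $*$-strongly dense subalgebra plus agreement of the two marginal restrictions) to get agreement on all of $M\odot M^{\op}$, hence $\ldeux(\id)\prec\ldeux(\alpha_g)$. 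If you want to salvage your outline, the fix is to move the regularization from the states $\omega_{\xi_i}$ to the elements $a$ (work with $M_{\alpha}$ and with states rather than vectors), not to seek uniform absolute continuity.
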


Specializing this result in the $\II_1$ factors case makes it clearer: saying that $g$ is such that $\ldeux(\id)\nprec\ldeux(\alpha_g)$ is exactly saying that there exists $\epsilon>0$ and $a_1,...,a_n\in M$ such that
\[\forall \xi \in \ldeux M, \sum_{i}\norm{\alpha_g(a_i)\xi-\xi a_i}_2^2\geq \epsilon \norm{\xi}_2^2.\] 
By Connes' result, this is equivalent to $\alpha_g\notin\overline{\Inn}(M)$. Therefore, by continuity of the action, there is an open neighborhood $\O$ of $g$ in $G$ such that for all $h\in \O$, $\alpha_h$ possesses a similar spectral gap property, but a priori with $\epsilon$ and $a_1,...,a_n$ depending on the group element. Theorem~\ref{thm:fundamental result on continuity of weak containment} exactly says that it is possible to choose a neighborhood $\O$ such that the spectral gap is uniform over $\O$:  there exists $\epsilon>0$, $a_1,...,a_n\in M$ such that for all $h$ in $\O$,
\[ \forall \xi\in \ldeux M, \sum_{i}\norm{\alpha_h(a_i)\xi-\xi a_i }_2^2\geq \epsilon \norm{\xi}_2^2.\]

Whereas in the full case, this result (see \cite[lemma 1]{marrakchivaesSpectral}) is a direct consequence of a uniform spectral gap result \emph{at the automorphism group level} by Jones \cite{jones_centralsequences_1982}, it is not clear at all that such a property (or a satisfying weakening of it) remains true for non-full factors. We obtain Theorem~\ref{thm:fundamental result on continuity of weak containment} using different methods: we rely on the locally compact group structure of G to apply smoothing procedures to well-chosen states.

\section{Preliminaries}

\paragraph*{Topologies on von Neumann algebras.}
Let $M$ be a von Neumann algebra, $M_*$ its predual. By weak topology on $M$ we mean the weak-$*$ topology relative to $M_*$. The strong topology (resp. the $*$-strong topology) is the topology induced by the seminorms $\norm{x}_{\phi}\doteq \phi(x^*x)^{1/2}$ (resp. $\norm{x}_{\phi}^{\sharp}\doteq \phi(x^*x)^{1/2}+\phi(xx^*)^{1/2}$) for $\phi\in M_*^+$. 

\paragraph*{Automorphisms group.}
The \emph{automorphism group} $\Aut(M)$ acts on $M_*$ by $\theta(\phi)=\phi\circ\theta$ for $\theta\in\Aut(M)$ and $\phi\in M_*$. Following \cite{Haagerup_1975}, it is endowed with the $u$-topology which is the topology of pointwise norm convergence on $M_*$: a net $(\theta_i)_{i\in I}$ converges to $\theta$ if and only if for all $\phi\in M_*$, $\norm{\theta_i(\phi)-\theta(\phi)}\to 0$. With this topology $\Aut(M)$ is a topological group that is Polish when $M_*$ is separable. The map 
\begin{align*}
 \Ad:\U(M)&\to \Aut(M)\\
 u&\mapsto [x\mapsto uxu^*]
\end{align*}
is continuous, and its image is the subgroup of \emph{inner} automorphisms denoted by $\Inn(M)$. When $M$ is a factor, we say that it is \emph{full} if the map $\Ad$ is open on its range \cite{Connes_almost_periodic_1974}. When $M_*$ is separable, $M$ is full if and only if $\Inn(M)=\overline{\Inn}(M)$, where $\overline{\Inn}(M)$ is the closure of $\Inn(M)$ in the $u$-topology. The topological group of classes of outer automorphisms is defined by $\Out(M)=\Aut(M)/\Inn(M)$, and the quotient map is denoted by $p:\Aut(M)\to \Out(M)$. When $\Inn(M)$ is not closed, we will consider $\underline{\Out}(M)=\Aut(M)/\overline{\Inn}(M)$ and the corresponding quotient map $\underline{p}:\Aut(M)\to\underline{\Out}(M)$. 

\paragraph*{Crossed products.} We refer to \cite[Chapter X,\S 1]{takesakiTheoryII} for proofs and details. 
Let $G$ be a locally compact group. We denote by $\mu$ one left Haar measure on it, by $\lambda:G\to \ldeux (G,\mu)$ (resp. $\rho:G\to \ldeux (G,\mu)$) its left (resp. right) regular representation and by $L(G)$ (resp. $R(G)$) the von Neumann algebra $\lambda(G)''$ (resp. $\rho(G)''$).  A \emph{continuous action} $\alpha:G\acts M$ of $G$ on a von Neumann algebra $M$ is a continuous group homomorphism $\alpha:G\to\Aut(M)$. 
The \emph{crossed product} associated to the action $\alpha:G\acts M$ is the von Neumann algebra generated by a copy of $M$ and a family of unitaries $\{u_g\mid g\in G\}$ verifying $u_gxu_g^*=\alpha_g(x)$ for $g\in G$ and $x\in M$ that admits a representation $\alpha:M\rtimes_{\alpha} G\to M\ptvn B(\ldeux G)$ such that $\alpha(u_g)=1\otimes \lambda_g$ and $\alpha(x)=[s\mapsto \alpha_{s^{-1}}(x)]\in M\ptvn \linf(G)\subset M\ptvn B(\ldeux G)$. 

\paragraph*{Smoothing procedure for von Neumann algebra valued functions.}

Let $\alpha:G\acts M$ be a continuous action on a von Neumann algebra. Given $x\in M$, the map $g\mapsto \alpha_g(x)$ is weakly continuous, but generally not norm continuous. Consider $M_{\alpha}=\{x\in M\mid g\mapsto\alpha_g(x) \text{ is norm continuous}\}$: $M_{\alpha}$ is an $\alpha$ invariant $C^*$-subalgebra of $M$, and it turns out that $M_{\alpha}$ is $*$-strongly dense in $M$. This is a consequence of the following useful smoothing procedure.  

First, recall that we can integrate some von Neumann algebra valued functions, even if they are not Bochner measurable. Let $f:(X,\mu)\to M$ be a map from a measure space to a von Neumann algebra with $x\mapsto \norm{f(x)}$ integrable. It is \emph{weakly measurable} if for all $\omega\in M_*$, $\omega\circ f:X\to\cc$ is measurable. For such a function, there exists a unique element $\int_Xfd\mu\in M$ such that for all $\omega\in M_*$, $\omega(\int_Xfd\mu)=\int_X(\omega\circ f)d\mu$. For all $x\in M$, $h\mapsto \alpha_h(x)$ is weakly integrable and therefore, for any Borel $X\subset G$ of non-zero finite Haar measure, we can define the \emph{smoothing map} $\alpha_{X}$ by 
\begin{align*}
    \alpha_{X}: M & \rightarrow M\\
 x & \mapsto \frac{1}{\mu(X)}\int_{X}\alpha_h(x)d\mu(h).
\end{align*}
 For all $ X\subset G$, $0<\mu(X)<\infty$ and for all $x\in M$, $\alpha_X(x)\in M_{\alpha}$. Furthermore, if $\B$ is a neighborhood basis of the identity in $G$, $\lim_{U\in \B}\alpha_{U}(x)=x$ in weak topology which implies the $*$-strong density of $M_{\alpha}$ in $M$. Note that for any $x\in M_{\alpha}$, $h\mapsto \alpha_h(x)$ is norm-continuous. Therefore, it is Bochner measurable which implies that for all $\omega \in M^*$ and $x\in M_{\alpha}$, $\omega(\alpha_X(x))=\frac{1}{\mu(X)}\int_{X}\omega(\alpha_h(x))d\mu(h)$ (this does not hold for a generic element in $M$). 

\paragraph*{Comparisons of bimodules.} We refer to \cite{connes1994noncommutative} and \cite{anantharaman1995amenable} for proofs and details. Given $M$ and $N$ two von Neumann algebras, an $M$-$N$-bimodule $(\H,\pi_{\H})$ (sometimes abbreviated to $\pi_{\H}$ or to $\H$) is a binormal $*$-representation of the algebra $M\pta N^{\op}$. We denote by $\lambda_{\H}$ and $\rho_{\H}$ the restriction of $\pi_{\H}$ to $M$ and $N^{\op}$, and for $\xi\in \H$, $a\in M$ and $b\in N$ we use the abbreviations: $a\cdot \xi\doteq \lambda_{\H}(a)\xi$ and $\xi\cdot b\doteq \rho_{\H}(b)\xi$ when there is no risk of confusion. 
There are several ways of comparing two $M$-$N$-bimodules $(\H,\pi_{\H})$ and $(\K,\pi_{\K})$:
\begin{itemize}
\item  $\MNbimod{\H}$ is \emph{contained} in $\MNbimod{\K}$ if there exists an isometry $V:\H\to\K$ intertwining the representations $\pi_{\H}$ and $\pi_{\K}$. If we can choose $V$ as a unitary, we say that the two bimodules are \emph{unitarily equivalent}, and we write $(\H,\pi_{\H})\simeq(\H,\pi_{\K})$. This is in fact equivalent to $\MNbimod{\H}\subset\MNbimod{\K}$ and $\MNbimod{\K}\subset \MNbimod{\H}$. We denote by $\Hom((\H,\pi_{\H}),(\K,\pi_{\K}))$ the set of linear maps between $\H$ and $\K$ that intertwine the two representations, and we put $\End(\H,\pi_{\H})\doteq\Hom((\H,\pi_{\H}),(\H,\pi_{\H}))$: it is exactly the von Neumann algebra $\pi_{\H}(M\pta N^{\op})'\subset B(\H)$. 

\item $\MNbimod{\H}$ is \emph{quasi-contained} in $\MNbimod{\K}$ if there exists a normal $*$-homomorphism $\Phi:\pi_{\K}(M\pta N^{\op})''\to \pi_{\H}(M\pta N^{\op})''$ such that the following diagram commutes.

\[\begin{tikzcd}
 {M\pta N^{\op} } & {\pi_{\K}(M\pta N^{\op})''} \\
 & {\pi_{\H}(M\pta N^{\op})''}
    \arrow["{\pi_{\K}}", from=1-1, to=1-2]
    \arrow["{\pi_{\H}}"'{pos=0.4}, from=1-1, to=2-2]
    \arrow["\Phi", from=1-2, to=2-2]
\end{tikzcd}\]

Equivalently, $\MNbimod{\H}$ is quasi-contained in $\MNbimod{\K}$ if there exists a central projection $z\in\Z(\pi_{\K}(M\pta N^{\op})'')$ such that $\pi_{\H}(M\pta N^{\op})''$ is isomorphic to $z\left[\pi_{\K}(M\pta N^{\op})''\right]$. If $\MNbimod{\H}$ is quasi-contained in $\MNbimod{\K}$ and $\MNbimod{\K}$ is quasi-contained in $\MNbimod{\H}$, we say that $\MNbimod{\H}$ and $\MNbimod{\K}$ are \emph{quasi-equivalent}. When $p$ is a projection in $\pi_{\H}(M\pta N^{\op})'$, $p\H$ is naturally endowed with an $M$-$N$-bimodule structure. If $p$ and $q$ are two such projections, $\MNbimod{p\H}$ is quasi-contained in $ \MNbimod{q\H}$ if and only if $z(p)\leq z(q)$, $z(p)$ being the central support of $p$ (check \cite[Chapter 6]{bekka2019unitary} for details).

\item $\MNbimod{\H}$ is \emph{weakly contained} in $\MNbimod{\K}$, written $\MNbimod{\H}\prec\MNbimod{\K}$ if there exists a $*$-homomorphism $\Phi:\overline{\pi_{\K}(M\pta N^{\op})}^{\norm{\cdot}}\to\overline{\pi_{\H}(M\pta N^{\op})}^{\norm{\cdot}}$ such that the following diagram commutes. 
\[\begin{tikzcd}
 {M\pta N^{\op} } & \overline{\pi_{\K}(M\pta N^{\op})}^{\norm{\cdot}} \\
 & \overline{\pi_{\H}(M\pta N^{\op})}^{\norm{\cdot}}
    \arrow["{\pi_{\K}}", from=1-1, to=1-2]
    \arrow["{\pi_{\H}}"'{pos=0.4}, from=1-1, to=2-2]
    \arrow["\Phi", from=1-2, to=2-2]
\end{tikzcd}\]

Equivalently, $\MNbimod{\H}\prec\MNbimod{\K}$ if for all $x\in M \pta N^{\op}$ (equivalently, for all $x\in M\otimes_{\max}N^{\op}$), $\norm{\pi_{\H}(x)}\leq \norm{\pi_{\K}(x)}$. If $\MNbimod{\H}\prec\MNbimod{\K}$ and $\MNbimod{\K}\prec\MNbimod{\H}$, $\MNbimod{\H}$ and $\MNbimod{\K}$ are \emph{weakly equivalent}, and we write $\MNbimod{\H}\sim\MNbimod{\K}$.
\end{itemize}

As indicated by the terminology, containment of bimodules implies quasi-containment which in turn implies weak containment. We will need the following formulation of weak containment in terms of states. 
\begin{Prop}\label{prop:equivalent_caracterisation_weak_containment}
 Let $\MNbimod{\H}$ and $\MNbimod{\K}$ be $M$-$N$-bimodules. Suppose that $\H$ admits a cyclic vector for $\pi_{\H}$. The following properties are equivalent:
    \begin{enumerate}
        \item $\MNbimod{\H}\prec \MNbimod{\K}$\label{itm:ecwc 1}
        \item For all unit vector $\xi\in\H$ there exists a state $\omega\in B(\K)^*$ such that $\omega(\pi_{\K}(a\otimes b^{\op}))=\ps{\pi_{\H}(a\otimes b^{\op})\xi}{\xi}$ for all $a\in M$, $b\in N$.\label{itm:ecwc 2}
        \item There exists a $\pi_{\H}$-cyclic unit vector $\xi_0\in\H$ and a state $\omega\in B(\K)^*$ such that  $\omega(\pi_{\K}(a\otimes b^{\op}))=\ps{\pi_{\H}(a\otimes b^{\op})\xi_0}{\xi_0}$ for all $a\in M$, $b\in N$.\label{itm:ecwc 3}
    \end{enumerate}
\end{Prop}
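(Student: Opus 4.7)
The plan is to establish the cycle (\ref{itm:ecwc 1}) $\Rightarrow$ (\ref{itm:ecwc 2}) $\Rightarrow$ (\ref{itm:ecwc 3}) $\Rightarrow$ (\ref{itm:ecwc 1}), translating back and forth between the $C^*$-norm bound formulation of weak containment and its GNS formulation in terms of states. This is the standard translation, analogous to the Fell-type characterization of weak containment of group representations; the cyclicity assumption on $\H$ is used only in the step (\ref{itm:ecwc 3}) $\Rightarrow$ (\ref{itm:ecwc 1}).

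For (\ref{itm:ecwc 1}) $\Rightarrow$ (\ref{itm:ecwc 2}), I would start from the $*$-homomorphism $\Phi:\overline{\pi_{\K}(M\pta N^{\op})}^{\norm{\cdot}}\to\overline{\pi_{\H}(M\pta N^{\op})}^{\norm{\cdot}}$ provided by the definition of $\prec$, which satisfies $\Phi\circ\pi_{\K}=\pi_{\H}$. Given a unit vector $\xi\in\H$, the vector state $T\mapsto\ps{T\xi}{\xi}$ restricts to a state on $\overline{\pi_{\H}(M\pta N^{\op})}^{\norm{\cdot}}$; pulling it back along $\Phi$ and extending (by the classical Hahn-Banach extension theorem for states on $C^*$-subalgebras) to a state $\omega$ on $B(\K)$ directly yields $\omega(\pi_{\K}(a\otimes b^{\op}))=\ps{\pi_\H(a\otimes b^{\op})\xi}{\xi}$ for all $a\in M$, $b\in N$. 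The implication (\ref{itm:ecwc 2}) $\Rightarrow$ (\ref{itm:ecwc 3}) is then immediate: specialize $\xi$ to a cyclic unit vector $\xi_0$ for $\pi_\H$, which exists by hypothesis.

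For the crucial implication (\ref{itm:ecwc 3}) $\Rightarrow$ (\ref{itm:ecwc 1}), I would let $(\pi_\omega,\H_\omega,\hat{1}_\omega)$ denote the GNS triple associated to $\omega\in B(\K)^*$ and set $\rho\doteq\pi_\omega\circ\pi_{\K}:M\pta N^{\op}\to B(\H_\omega)$. Extending the hypothesis by linearity, the assumption unpacks for all $x,y\in M\pta N^{\op}$ into
\[\ps{\pi_\H(x)\xi_0}{\pi_\H(y)\xi_0}=\ps{\pi_\H(y^*x)\xi_0}{\xi_0}=\omega(\pi_\K(y^*x))=\ps{\rho(x)\hat{1}_\omega}{\rho(y)\hat{1}_\omega}.\]
Since $\xi_0$ is $\pi_\H$-cyclic, the prescription $\pi_\H(x)\xi_0\mapsto\rho(x)\hat{1}_\omega$ is well-defined and isometric on a dense subspace, so it extends to an isometry $V:\H\to\H_\omega$ intertwining $\pi_\H$ with $\rho$. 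Consequently $\pi_\H(x)=V^*\rho(x)V$, giving $\norm{\pi_\H(x)}\leq\norm{\rho(x)}\leq\norm{\pi_\K(x)}$ for all $x\in M\pta N^{\op}$; this norm bound is exactly the content of $\pi_\H\prec\pi_\K$ and produces the required bounding $*$-homomorphism via the first isomorphism theorem for $C^*$-algebras.

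The only point requiring care is the state extension step in (\ref{itm:ecwc 1}) $\Rightarrow$ (\ref{itm:ecwc 2}), relying on the classical fact that any state on a $C^*$-subalgebra of $B(\K)$ extends to a state on the ambient $C^*$-algebra. Everything else reduces to standard GNS manipulations adapted to the bimodule context, and I do not anticipate any substantial technical obstacle.
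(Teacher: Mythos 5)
Your proof is correct. The implication (\ref{itm:ecwc 1}) $\Rightarrow$ (\ref{itm:ecwc 2}) is exactly the paper's argument (pull the vector state back along $\Phi$ and extend by Hahn--Banach; note that since $\pi_{\K}(1\otimes 1)=1_{B(\K)}$ and $\Phi$ is unital, the extension is indeed a state), and (\ref{itm:ecwc 2}) $\Rightarrow$ (\ref{itm:ecwc 3}) is trivial in both treatments. Where you genuinely diverge is (\ref{itm:ecwc 3}) $\Rightarrow$ (\ref{itm:ecwc 1}): the paper never constructs a representation, but instead bounds $\norm{\pi_{\H}(x)}$ directly, writing $\norm{\pi_{\H}(x)}=\norm{\pi_{\H}(\abs{x})}$ as a supremum over vectors $\pi_{\H}(y)\xi_0$ (using cyclicity), converting $\ps{\pi_{\H}(y^*\abs{x}y)\xi_0}{\xi_0}$ into $\omega\circ\pi_{\K}(y^*\abs{x}y)$ and using positivity of $\omega$ together with $\pi_{\K}(y^*\abs{x}y)\leq\norm{\pi_{\K}(\abs{x})}\,\pi_{\K}(y^*y)$. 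You instead run the GNS construction for $\omega$, check that $\pi_{\H}(x)\xi_0\mapsto\pi_\omega(\pi_{\K}(x))\hat{1}_\omega$ is isometric on the cyclic dense subspace (which is where the hypothesis, linearized over $M\pta N^{\op}$, enters), and obtain $\pi_{\H}(x)=V^*\pi_\omega(\pi_{\K}(x))V$, hence $\norm{\pi_{\H}(x)}\leq\norm{\pi_{\K}(x)}$. Both arguments are standard Fell-type translations and both are complete; your dilation route is slightly more conceptual (it exhibits $\H$ inside the GNS space of $\omega$ and works directly on the algebraic tensor product, with no need to pass to $\abs{x}$ in $M\otimes_{\max}N^{\op}$), while the paper's chain of inequalities is more elementary in that it avoids the GNS machinery altogether.
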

\begin{proof}\; 
    \begin{enumerate}[itemindent=26pt]
        \item[$\ref{itm:ecwc 1} \Rightarrow~\ref{itm:ecwc 2}$] Let $\Phi:\overline{\pi_{\K}(M\pta N^{\op})}^{\norm{\cdot}}\to\overline{\pi_{\H}(M\pta N^{\op})}^{\norm{\cdot}}$ be the $*$-homomorphism given by the weak containment, and $\xi\in \H$ a unit vector. Extending $x\mapsto \ps{\Phi(x)\xi}{\xi}$ to $B(\K)$ by Hahn-Banach produces a state with the required properties. 
        \item[$\ref{itm:ecwc 3} \Rightarrow~\ref{itm:ecwc 1}$] Let $x\in M\otimes_{\max} N^{\op}$.
            \begin{flalign*}
 \norm{\pi_{\H}(x)} &=\norm{\pi_{\H}(\abs{x})}
                        = \sup_{\norm{\xi}=1}\ps{\pi_{\H}(\abs{x})\xi}{\xi}
                        =\sup_{\substack{y\in M\otimes_{\max} N^{\op}\\ \norm{\pi_{\H}(y)\xi_0}\leq 1}}\ps{\pi_{\H}(\abs{x})\pi_{\H}(y)\xi_0}{\pi_{\H}(y)\xi_0} & & \\
                    &= \sup_{\substack{y\in M\otimes_{\max} N^{\op}\\ \norm{\pi_{\H}(y)\xi_0}\leq 1}}\ps{\pi_{\H}(y^*\abs{x}y)\xi_0}{\xi_0}
                        = \sup_{\substack{y\in M\otimes_{\max} N^{\op}\\ \norm{\pi_{\H}(y)\xi_0}\leq 1}} \omega\circ \pi_{\K}(y^*\abs{x}y)& &\\
                    &\leq \norm{\pi_{\K}(\abs{x})}\sup_{\substack{y\in M\otimes_{\max} N^{\op}\\\norm{\pi_{\H}(y)\xi_0}\leq 1}} \omega\circ \pi_{\K}(y^*y)
                        = \norm{\pi_{\K}(\abs{x})}\sup_{\substack{y\in M\otimes_{\max} N^{\op}\\\norm{\pi_{\H}(y)\xi_0}\leq 1}} \ps{\pi_{\H}(y^*y)\xi_0}{\xi_0} & &\\
                    &\leq \norm{\pi_{\K}(\abs{x})}=\norm{\pi_{\K}(x)}& &
            \end{flalign*} 
    \end{enumerate}
\end{proof}

\paragraph*{Operations on bimodules.} 

Given $(\H,\pi_{H})$ and $M$-$N$-bimodule and $(\K,\pi_{\K})$ an $N$-$P$-bimodule, we denote by $(\H\otimes_{N}\K,\pi_{\H}\otimes_N \pi_{\K})$ the \emph{Connes tensor product} of these two bimodules, which is an $M$-$P$-bimodule \parencite[Appendix B.$\delta$]{connes1994noncommutative}. 

Given $(\H,\pi_{\H})$ an $M$-$N$-bimodule, we denote by $(\overline{H}, \overline{\pi_{\H}})$ the \emph{contragredient} bimodule associated to $(\H,\pi_{\H})$, which is an $N$-$M$-bimodule. It is defined in the following way: if $\overline{\H}$ is the conjugate Hilbert space of $\H$, we denote by $\xi \mapsto \overline{\xi}$ the canonical antilinear isometry from $\H$ to $\overline{\H}$. Then the $N$-$M$ bimodule structure on $\overline{\H}$ is given by $b\cdot \overline{\xi} \cdot a\doteq \overline{a^*\cdot \xi\cdot b^*}$\parencite[Appendix B.$\delta$]{connes1994noncommutative}.

These two operations behaves well with respect to weak containment: 

\begin{Prop}[{\textmd{\cite[Proposition 2.2.1]{popa1986correspondences}}}]\label{Prop:continuity of operations on bimodules}
    Let $\H$ and $\K$ be two $M$-$N$-bimodules such that $\H\prec\K$. Then: 
    \begin{enumerate}
        \item $\overline{\H}\prec\overline{\K}$
        \item  For $\L$ any $P$-$M$-bimodule, $\L\otimes_M\H\prec\L\otimes_M\K$
        \item  For $\L$ any $N$-$P$-bimodule, $\H\otimes_N\L\prec\K\otimes_N\L$
    \end{enumerate}
\end{Prop}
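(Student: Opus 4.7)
The plan is to establish part~(1) by direct verification, use the state characterization from Proposition~\ref{prop:equivalent_caracterisation_weak_containment} for part~(2), and deduce part~(3) from (1) and (2) via the canonical identification $\overline{\H \otimes_N \L} \simeq \overline{\L} \otimes_N \overline{\H}$.

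For part~(1), I would unwind the definition $b \cdot \overline{\xi} \cdot a = \overline{a^* \xi b^*}$ to verify that the canonical antilinear isometry $J_{\H}: \xi \mapsto \overline{\xi}$ intertwines
\[ \overline{\pi_{\H}}(b \otimes a^{\op}) = J_{\H}\, \pi_{\H}\bigl(a^* \otimes (b^*)^{\op}\bigr)\, J_{\H}^{-1}, \]
and check that the swap $\sigma : N \pta M^{\op} \to M \pta N^{\op}$, $b \otimes a^{\op} \mapsto a^* \otimes (b^*)^{\op}$, is a $*$-isomorphism. Since conjugation by an antilinear isometry preserves operator norms, $\norm{\overline{\pi_{\H}}(y)} = \norm{\pi_{\H}(\sigma(y))}$ and likewise for $\K$. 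The assumption $\H \prec \K$ then yields $\norm{\overline{\pi_{\H}}(y)} \leq \norm{\overline{\pi_{\K}}(y)}$ for every $y \in N \pta M^{\op}$, i.e.\ $\overline{\H} \prec \overline{\K}$.

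For part~(2), I would apply Proposition~\ref{prop:equivalent_caracterisation_weak_containment}. After the standard reduction to cyclic sub-bimodules and using density of sums of simple tensors with right-$M$-bounded left factor, it suffices to treat a unit vector of the form $v = \sum_{i=1}^n \zeta_i \otimes \xi_i \in \L \otimes_M \H$ with each $\zeta_i \in \L$ right-$M$-bounded, and build a matching state on $B(\L \otimes_M \K)$. The right-boundedness makes the operator
\[V : \H^n \to \L \otimes_M \H, \quad (\eta_i)_i \mapsto \sum_i \zeta_i \otimes \eta_i\]
bounded, as is its twin $V' : \K^n \to \L \otimes_M \K$ built from the same $\zeta_i$'s. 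A direct computation using the defining $M$-valued inner product of Connes' tensor product gives the crucial compression identity
\[V^* \pi_{\L \otimes_M \H}(p \otimes n^{\op}) V = \pi_{\H^n}\bigl(\Psi(p \otimes n^{\op})\bigr), \quad \Psi(p \otimes n^{\op}) := [\langle \zeta_j, p\zeta_i\rangle_M]_{j,i} \otimes n^{\op},\]
together with the identical formula for $V'$ and $\K$; crucially, $\Psi$ depends only on the $\zeta_i$'s and takes values in $M_n(M) \pta N^{\op}$. Amplification preserves weak containment, so $\H^n \prec \K^n$; Proposition~\ref{prop:equivalent_caracterisation_weak_containment} applied to the unit vector $\xi/\norm{\xi}$ in $\H^n$ (with $\xi := (\xi_i)_i$) furnishes a state $\omega' \in B(\K^n)^*$ with $\omega'(\pi_{\K^n}(y)) = \ps{\pi_{\H^n}(y)\xi}{\xi}/\norm{\xi}^2$ for all $y \in M_n(M) \pta N^{\op}$. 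The pullback $\omega(S) := \norm{\xi}^2\, \omega'((V')^* S V')$ is then a state on $B(\L \otimes_M \K)$ (using $(V')^* V' = \pi_{\K^n}(\Psi(1 \otimes 1^{\op}))$ to compute $\omega(1) = \norm{v}^2 = 1$), and by the compression identities it agrees on $\pi_{\L \otimes_M \K}(P \pta N^{\op})$ with the vector state of $v$, completing the verification.

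Part~(3) follows formally from (1) and (2): from $\H \prec \K$, apply (1) to obtain $\overline{\H} \prec \overline{\K}$, apply (2) to get $\overline{\L} \otimes_N \overline{\H} \prec \overline{\L} \otimes_N \overline{\K}$, then invoke (1) once more via $\overline{\overline{\L} \otimes_N \overline{\H}} \simeq \H \otimes_N \L$. The main technical obstacle is the compression identity in part~(2): one must carefully compute $V^* \pi_{\L \otimes_M \H}(x) V$ from the definition of the Connes tensor product to see that it lies in $\pi_{\H^n}(M_n(M) \pta N^{\op})$, and that the resulting map $\Psi$ depends only on the $\zeta_i$'s and the $M$-valued inner product on $\L$ (not on $\H$), so that the same intermediate map $\Psi$ can transport the witnessing state through both $V$ and $V'$.
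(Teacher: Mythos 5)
The paper offers no proof of this proposition at all: it is quoted verbatim from Popa's correspondences paper (Proposition 2.2.1), so there is no in-text argument to measure you against. Judged on its own, your proof is correct. Part (1) is the standard conjugation argument (note only that your swap $\sigma$ is a \emph{conjugate-linear} $*$-isomorphism, which still preserves norms, so the inequality goes through). In part (2) the compression identity $V^*\pi_{\L\otimes_M\H}(p\otimes n^{\op})V=\pi_{\H^n}(\Psi(p\otimes n^{\op}))$ is a correct computation with the $M$-valued inner product of right-$M$-bounded vectors, and your key observation --- that $\Psi$ is built from $\L$ alone, so the same element of $M_n(M)\pta N^{\op}$ compresses both $\pi_{\L\otimes_M\H}$ through $V$ and $\pi_{\L\otimes_M\K}$ through $V'$ --- is exactly what makes the transport of the witnessing state work; the normalization check $\omega(1)=1$ via $(V')^*V'=\pi_{\K^n}(\Psi(1\otimes 1^{\op}))$ is right. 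Part (3) by dualizing through $\overline{\H\otimes_N\L}\simeq\overline{\L}\otimes_N\overline{\H}$ is standard and fine.

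Your route differs from the cited one: Popa's proof (and the usual textbook treatment) goes through the dictionary between correspondences and completely positive maps, where weak containment becomes pointwise approximation of coefficient c.p.\ maps and the Connes tensor product becomes composition, so stability under $\otimes_M$ and under contragredients is essentially formal; your argument instead stays entirely at the Hilbert-space/state level and uses only Proposition~\ref{prop:equivalent_caracterisation_weak_containment} of this paper, at the price of the explicit compression/amplification bookkeeping. Two points you should tighten. First, Proposition~\ref{prop:equivalent_caracterisation_weak_containment} carries a cyclicity hypothesis which is only needed for the implication $\ref{itm:ecwc 3}\Rightarrow\ref{itm:ecwc 1}$; when you invoke $\ref{itm:ecwc 1}\Rightarrow\ref{itm:ecwc 2}$ for the generally non-cyclic $M_n(M)$-$N$-bimodule $\H^n$, say explicitly that this direction needs no cyclic vector (its proof is a Hahn--Banach extension), or pass to the cyclic sub-bimodule generated by $\xi$. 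Second, the claim that amplification preserves weak containment deserves its one-line justification: the $*$-homomorphism witnessing $\H\prec\K$ is completely contractive, so tensoring it with $\id_{M_n(\cc)}$ gives the norm inequality on $M_n(M)\pta N^{\op}$. With these remarks added, and with the final density/supremum step (every unit vector of $\L\otimes_M\H$ is approximated by vectors of your special form, whose cyclic sub-bimodules are weakly contained in $\L\otimes_M\K$) spelled out, the argument is complete.
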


\paragraph*{Classical bimodules.} In this paragraph we refer to \cite{Haagerup_1975} and \cite[Chapter 5 Appendix B]{connes1994noncommutative} for details.
Let $M$ be a von Neumann algebra. A fundamental example of $M$-$M$-bimodule is given by the standard form $(M,\ldeux M, J,\ldeux M^+)$ of $M$, where $\ldeux M$ is a Hilbert space, $J$ the canonical antilinear involution, and $\ldeux M^+$ the positive cone of $\ldeux M$. The Hilbert space $\ldeux M$ is canonically endowed with a bimodule structure: for $\xi \in \ldeux M$ and $a\in M$, we denote by $a\xi$ (resp. $\xi a$) the corresponding left (resp. right) multiplication.  This defines the \emph{standard bimodule}. The standard $M$-$M$-bimodule is a neutral element for the Connes tensor product: $\ldeux(M)\otimes_M\H$, $\H\otimes_M\ldeux(M)$ and $\H$ are unitarily equivalent.

Each automorphism $\theta\in\Aut(M)$ defines an $M$-$M$-bimodule $\ldeux(\theta)\doteq(\ldeux M,\pi_{\theta})$ through $\pi_{\theta}(a\otimes b^{\op})\xi\doteq \theta^{-1}(a)\xi b$ for all $a,b\in M, \xi\in\ldeux M$. In particular, $\ldeux(\id)$ is the standard bimodule.

Given $(X,\mu)$ a measure space, a Borel map $\chi:(X,\mu)\to \Aut(M)$ defines an $M$-$M$-bimodule $\ldeux(\chi)\doteq(\ldeux(X,\mu)\otimes \ldeux M, \pi_{\chi})$ through $\pi_{\chi}(a\otimes b^{\op})\xi(x)\doteq \chi(x)^{-1}(a)\xi(x) b$ for all $a,b\in M$, $x\in X$, $\xi\in \ldeux(X,\mu)\otimes \ldeux M$. In particular, if \(\alpha:G\acts M\) is a continuous action of a locally compact group $G$ on $M$, each Borel subset $A\subset G$ defines a bimodule $\ldeux(\alpha_{\mid A})$.

Here are some standard properties of these bimodules:

\begin{Prop}[{\textmd{\cite[Appendix B.$\delta$]{connes1994noncommutative}}}]\label{Prop:basic properties of bimodules associated to automorphisms}\;
    \begin{enumerate}
        \item Let $\theta\in \Aut(M)$. Then $\overline{\ldeux(\theta)}\simeq\ldeux(\theta^{-1})$. 
        \item Let $\theta_1, \theta_2\in \Aut(M)$. Then $\ldeux(\theta_1)\otimes_M\ldeux(\theta_2)\simeq \ldeux(\theta_1\circ \theta_2)$.
        \item Let $\alpha:G\acts M$ be a continuous action of $G$ on $M$, $A$ be a Borel subset of $G$ and $h\in G$. Then $\ldeux(\alpha_h)\otimes_M\ldeux(\alpha_{\mid A})\simeq\ldeux(\alpha_{\mid h\cdot A})$
    \end{enumerate}
\end{Prop}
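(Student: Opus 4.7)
The three items are classical and each is established by exhibiting an explicit unitary intertwiner between the relevant bimodules. The unifying tool is the canonical implementation of automorphisms on the standard form: for each $\theta\in\Aut(M)$, there is a unique unitary $u_\theta\in B(\ldeux M)$ that preserves the positive cone, commutes with the modular conjugation $J$, and implements $\theta$ via $u_\theta a u_\theta^{*}=\theta(a)$. This implementation is multiplicative, $u_{\theta_1\theta_2}=u_{\theta_1}u_{\theta_2}$, and it intertwines both left and right multiplication in the form $u_\theta(a\xi b)=\theta(a)(u_\theta\xi)\theta(b)$.

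For (1), I would define $U:\overline{\ldeux(\theta)}\to\ldeux(\theta^{-1})$ by $U\overline{\xi}=u_\theta J\xi$. Using the identities $J(a\xi)=(J\xi)a^{*}$ and $J(\xi a)=a^{*}(J\xi)$ (which express the fact that $JaJ$ is right multiplication by $a^{*}$) together with the intertwining properties of $u_\theta$, a direct computation shows that $U$ respects both the left action (sending $\overline{\xi b^{*}}$ to $\theta(b)U\overline{\xi}$) and the right action (sending $\overline{\theta^{-1}(a^{*})\xi}$ to $(U\overline{\xi})a$). For (2), the plan is to construct $V:\ldeux(\theta_1)\otimes_M\ldeux(\theta_2)\to\ldeux(\theta_1\circ\theta_2)$ by combining the standard identification $\ldeux M\otimes_M\ldeux M\simeq\ldeux M$ (through multiplication of bounded vectors) with a twist by $u_{\theta_2^{-1}}$, and verifying that the balancing $\xi b\otimes\eta\sim\xi\otimes\theta_2^{-1}(b)\eta$ is respected and that the multiplicativity of the canonical implementation forces the resulting left twist to be $(\theta_1\theta_2)^{-1}$, as required.

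For (3), which is the item that will actually be used later in the paper, the strategy is to decompose $\ldeux(\alpha_{|A})=\ldeux(A,\mu)\otimes\ldeux M$ as a direct integral of bimodules $\intd_A\ldeux(\alpha_g)\,d\mu(g)$, where the fiber at $g$ carries the $\ldeux(\alpha_g)$ structure pointwise. Since the Connes tensor product commutes with direct integrals on the right factor (once the relevant measurability is verified), one obtains
\[
\ldeux(\alpha_h)\otimes_M\ldeux(\alpha_{|A})\simeq\intd_A\ldeux(\alpha_h)\otimes_M\ldeux(\alpha_g)\,d\mu(g).
\]
Applying (2) pointwise identifies each fiber with $\ldeux(\alpha_{hg})$, and the change of variables $g\mapsto h^{-1}g$ combined with the left-invariance of the Haar measure $\mu$ yields
\[
\intd_A\ldeux(\alpha_{hg})\,d\mu(g)\simeq\intd_{h\cdot A}\ldeux(\alpha_g)\,d\mu(g)=\ldeux(\alpha_{|h\cdot A}).
\]

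The main technical obstacle is not the algebraic content of the intertwiners (which is essentially formal) but rather ensuring that the pointwise unitaries assemble into a globally measurable field, so that the direct-integral argument in (3) is rigorous. This is where the continuity assumption on $\alpha$ enters: it guarantees that $g\mapsto u_{\alpha_g}$ is strongly measurable, which together with the $\sigma$-finiteness of the Haar measure is enough for the standard theory of direct integrals of Hilbert bimodules to apply.
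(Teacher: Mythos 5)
Your argument is correct, and it is the standard one: the paper itself gives no proof of this proposition, citing \cite[Appendix B.$\delta$]{connes1994noncommutative}, and your explicit intertwiners via the canonical (positive-cone preserving) implementation $\theta\mapsto u_\theta$, which is multiplicative, commutes with $J$ and satisfies $u_\theta(a\xi b)=\theta(a)(u_\theta\xi)\theta(b)$, reproduce exactly the classical verification; in particular your computation for (1) and the twist-by-$u_{\theta_2^{-1}}$ identification ${}_{\theta_1^{-1}}\ldeux (M)_{\theta_2}\simeq\ldeux(\theta_1\circ\theta_2)$ in (2) come out with the correct conventions.

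One small remark on (3): the ``main technical obstacle'' you flag (measurability of $g\mapsto u_{\alpha_g}$ and direct-integral bookkeeping) is not actually needed. Since $\ldeux(\alpha_h)$ is an automorphism bimodule, tensoring on the left by it only re-twists the left action: $\ldeux(\alpha_h)\otimes_M\ldeux(\alpha_{\mid A})$ is unitarily equivalent to $\ldeux(A,\mu)\otimes\ldeux M$ with actions $(a\cdot\xi)(x)=\alpha_{hx}^{-1}(a)\xi(x)$ and $(\xi\cdot b)(x)=\xi(x)b$, via the canonical unit isomorphism composed with the single unitary $1\otimes u_{\alpha_h}^{*}$ (no measurable field of unitaries is involved). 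Then the translation unitary $(\tilde\xi)(y)=\xi(h^{-1}y)$, which is isometric by left invariance of $\mu$, carries this onto $\ldeux(\alpha_{\mid h\cdot A})$. So your direct-integral formulation is a legitimate way to phrase the argument, but the equivalence is witnessed by one explicit unitary and the continuity of $\alpha$ plays no role beyond making $\ldeux(\alpha_{\mid A})$ well defined.
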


\begin{Prop}\label{prop:weak equivalence with essential image}
    Let $(X,\nu)$ be a $\sigma$-finite measure space and $\chi:(X,\nu)\to \Aut(M)$ a Borel map. Then \[\ldeux(\chi)\sim \bigoplus_{\theta\in \essim \chi}\ldeux(\theta),\]
    where $\essim(\chi)$ is the \emph{essential range} of $\chi$ (i.e. the support of the pushforward measure $\chi_{*}\nu$).
\end{Prop}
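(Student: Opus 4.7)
Plan. My approach uses the characterisation of weak containment via $C^*$-norms on $M\pta M^{\op}$: it suffices to show that $\norm{\pi_\chi(x)}=\sup_{\theta\in\essim\chi}\norm{\pi_\theta(x)}$ for every $x\in M\pta M^{\op}$. Since $\ldeux(\chi)$ is the direct integral $\intd\ldeux(\chi(y))\,d\nu(y)$, the operator $\pi_\chi(x)$ is decomposable with fibres $\pi_{\chi(y)}(x)$, so $\norm{\pi_\chi(x)}=\esssup_{y\in X}\norm{\pi_{\chi(y)}(x)}$; while the direct sum representation satisfies $\norm{\bigoplus_{\theta\in\essim\chi}\pi_\theta(x)}=\sup_{\theta\in\essim\chi}\norm{\pi_\theta(x)}$. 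The inequality ``$\leq$'', corresponding to $\ldeux(\chi)\prec\bigoplus_\theta\ldeux(\theta)$, is automatic: since $\Aut(M)$ is second countable and $\essim\chi$ is the support of $\chi_*\nu$, we have $\chi(y)\in\essim\chi$ for $\nu$-almost every $y$, hence $\norm{\pi_{\chi(y)}(x)}\leq\sup_{\theta\in\essim\chi}\norm{\pi_\theta(x)}$ almost everywhere.

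The reverse inequality, corresponding to $\bigoplus_\theta\ldeux(\theta)\prec\ldeux(\chi)$, is the substantive step. I would reduce it to the following claim: for every fixed $x\in M\pta M^{\op}$, the map $\theta\mapsto\norm{\pi_\theta(x)}$ is lower semi-continuous on $\Aut(M)$ endowed with the $u$-topology. Granting this, for any $\theta\in\essim\chi$ and $\epsilon>0$ the set $U=\{\theta'\in\Aut(M):\norm{\pi_{\theta'}(x)}>\norm{\pi_\theta(x)}-\epsilon\}$ is an open neighbourhood of $\theta$, so by definition of the essential range $\nu(\chi^{-1}(U))>0$, forcing $\esssup_y\norm{\pi_{\chi(y)}(x)}\geq\norm{\pi_\theta(x)}-\epsilon$. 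Letting $\epsilon\to 0$ and taking the supremum over $\theta\in\essim\chi$ yields the required inequality.

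To prove the lower semi-continuity claim, I would show that $\theta\mapsto\pi_\theta(x)\in B(\ldeux M)$ is strongly continuous, for every $x\in M\pta M^{\op}$. By linearity it suffices to treat elementary tensors $x=a\otimes b^{\op}$: if $\theta_n\to\theta$ in the $u$-topology then $\theta_n^{-1}\to\theta^{-1}$ in the $u$-topology by continuity of inversion, and a standard argument (expanding $\phi((\theta_n^{-1}(a)-\theta^{-1}(a))^*(\theta_n^{-1}(a)-\theta^{-1}(a)))$ and using that $\theta_n^{-1}$ is a $*$-homomorphism) shows that $\theta_n^{-1}(a)\to\theta^{-1}(a)$ in the $*$-strong topology on $M$. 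Therefore, for every $\eta\in\ldeux M$,
\[\pi_{\theta_n}(a\otimes b^{\op})\eta=\theta_n^{-1}(a)\,\eta\, b\longrightarrow\theta^{-1}(a)\,\eta\,b=\pi_\theta(a\otimes b^{\op})\eta\]
in $\ldeux M$. Strong convergence $T_n\to T$ then implies $\norm{T\xi}=\lim\norm{T_n\xi}\leq \liminf\norm{T_n}\cdot\norm{\xi}$ for every $\xi$, and hence $\norm{T}\leq\liminf\norm{T_n}$, which is precisely the desired lower semi-continuity of the norm.

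The main obstacle is the lower semi-continuity step, which requires translating the $u$-topology on $\Aut(M)$ into strong operator continuity of $\theta\mapsto\pi_\theta(x)$; once this is granted, the remainder of the argument is a formal manipulation of decomposable operators, essential ranges, and direct sums of Hilbert spaces.
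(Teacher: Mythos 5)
Your proposal is correct and follows essentially the same route as the paper: both reduce to the fibrewise norm identity $\norm{\pi_{\chi}(x)}=\esssup_y\norm{\pi_{\chi(y)}(x)}=\sup_{\theta\in\essim\chi}\norm{\pi_{\theta}(x)}$ via decomposable operators, and both obtain the substantive inequality from the fact that every neighborhood of a point of the support has positive measure together with $u$-topology continuity of $\theta\mapsto\pi_{\theta}$. The only cosmetic difference is that the paper tests matrix coefficients against vectors $\xi\otimes\eta_i$ with $\eta_i$ normalized indicators of shrinking preimages, whereas you package the same continuity as strong operator continuity of $\theta\mapsto\pi_{\theta}(x)$ and lower semicontinuity of the norm.
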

\begin{proof}
    Let $a,b\in M$. Then \[\norm{\pi_{\chi}(a\otimes b^{\op})}=\esssup_{x\in(X,\nu)}\norm{\pi_{\chi(x)}(a\otimes b^{\op})}=\sup_{\theta\in\essim \chi}\norm{\pi_{\theta}(a\otimes b^{\op})}.\] The first equality is a general result about decomposable operators (see \cite[Section 1.H]{bekka2019unitary}). As for the second, the inequality $\esssup_{x\in(X,\nu)}\norm{\pi_{\chi(x)}(a\otimes b^{\op})}\leq\sup_{\theta\in\essim \chi}\norm{\pi_{\theta}(a\otimes b^{\op})}$ is a simple consequence of the definitions. For the other direction, let $\theta\in \essim(\chi)$. Let $(\U_i)_{i\in I}$ be a basis of neighborhoods of $\theta$ in $\Aut(M)$. For all $i\in I$, $\nu(\chi^{-1}(\U_i))>0$, and by $\sigma$-finiteness of $\nu$ we can take Borel sets $\V_i\subset \chi^{-1}(\U_i)$ with $0<\nu(\V_i)<\infty$. Define $\eta_i=\frac{1}{\nu(\V_i)^{1/2}}1_{\V_i}\in \ldeux(X,\nu)$. For any $\xi\in\ldeux M$ and $a,b\in M$, 
    \[\ps{\pi_{\chi}(a\otimes b^{\op})\xi\otimes \eta_i}{\xi\otimes \eta_i}\tend{i}{\infty}\ps{\pi_{\theta}(a\otimes b^{\op})\xi}{\xi}.\]
    But for all $i\in I$, $\norm{\pi_{\chi}(a\otimes b^{\op})}\norm{\xi}^2=\norm{\pi_{\chi}(a\otimes b^{\op})}\norm{\xi\otimes \eta_i}^2\geq \ps{\pi_{\chi}(a\otimes b^{\op})\xi\otimes \eta_i}{\xi\otimes \eta_i}$. So $\norm{\pi_{\theta}(a\otimes b^{\op})}\leq \norm{\pi_{\chi}(a\otimes b^{\op})}$, and more generally $\sup_{\theta\in \essim(\chi)}\norm{\pi_{\theta}(a\otimes b^{\op})}\leq \norm{\pi_{\chi}(a\otimes b^{\op})}$.
\end{proof}

As a consequence, if $\alpha:G\acts M$ is a continuous action, $A\subset G$ is a Borel subset and $g$ is in the support of $\mu_{\mid A}$, then $\ldeux(\alpha_g)\prec \ldeux(\alpha_{\mid A})$. Note that if $\theta\in\Aut(M)$ is such that $\ldeux(\id)\prec \ldeux(\theta)$, then by propositions \ref{Prop:continuity of operations on bimodules} and \ref{Prop:basic properties of bimodules associated to automorphisms}, $\ldeux(\id)\simeq \overline{\ldeux(\id)}\prec \overline{\ldeux(\theta)}\simeq \ldeux(\theta^{-1})$, and therefore $\ldeux(\id)\sim \ldeux(\theta)$. In particular, if $\alpha:G\acts M$ is a continuous action, the set $H$ of elements $h\in G$ such that $\ldeux(\id)\prec \ldeux(\alpha_h)$ is a subgroup of $G$. 

\begin{Lem}\label{lem:weak equivalence H(HU) et H(U)} Let $\alpha:G\acts M$ be a continuous action of a locally compact group on a von Neumann algebra. Let $H=\{g\in G\mid \ldeux(\id)\prec \ldeux(\alpha_g) \}=\{g\in G\mid \ldeux(\id)\sim \ldeux(\alpha_g) \}$ and $U$ be an open subset of $G$. Then $\ldeux(\alpha_{\mid UH})$ and $\ldeux(\alpha_{\mid U})$ are weakly equivalent. 
\end{Lem}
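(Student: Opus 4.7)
The plan is to prove the weak equivalence by establishing both weak containments. Since $\ldeux(\id)\prec\ldeux(\id)$ trivially, $1\in H$, hence $U\subset UH$. The natural inclusion $\ldeux(U,\mu)\hookrightarrow\ldeux(UH,\mu)$ (extension by zero) then induces an isometric bimodule intertwiner $\ldeux(\alpha_{\mid U})\hookrightarrow\ldeux(\alpha_{\mid UH})$, giving a containment of bimodules and \emph{a fortiori} the weak containment $\ldeux(\alpha_{\mid U})\prec\ldeux(\alpha_{\mid UH})$.

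For the converse direction, I first establish the pointwise statement that $\ldeux(\alpha_g)\prec\ldeux(\alpha_{\mid U})$ for every $g\in UH$. Fix a decomposition $g=uh$ with $u\in U$ and $h\in H$. By Proposition~\ref{Prop:basic properties of bimodules associated to automorphisms}(2),
\[ \ldeux(\alpha_g)=\ldeux(\alpha_u\circ\alpha_h)\simeq \ldeux(\alpha_u)\otimes_M\ldeux(\alpha_h).\]
Since $h\in H$ means $\ldeux(\alpha_h)\sim\ldeux(\id)$, two applications of Proposition~\ref{Prop:continuity of operations on bimodules}(2) yield
\[ \ldeux(\alpha_u)\otimes_M\ldeux(\alpha_h)\sim \ldeux(\alpha_u)\otimes_M\ldeux(\id)\simeq \ldeux(\alpha_u),\]
using that the standard bimodule is neutral for the Connes tensor product. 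Finally, $u\in U$ lies in the support of $\mu_{\mid U}$ because $U$ is open, so the remark following Proposition~\ref{prop:weak equivalence with essential image} yields $\ldeux(\alpha_u)\prec\ldeux(\alpha_{\mid U})$. Chaining these gives $\ldeux(\alpha_g)\prec\ldeux(\alpha_{\mid U})$, i.e., $\norm{\pi_{\alpha_g}(y)}\leq\norm{\pi_{\alpha_{\mid U}}(y)}$ for every $y\in M\otimes_{\max}M^{\op}$.

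To conclude, I would apply the essential supremum formula for the norm of a decomposable operator already invoked in the proof of Proposition~\ref{prop:weak equivalence with essential image}: for every $y\in M\otimes_{\max}M^{\op}$,
\[ \norm{\pi_{\alpha_{\mid UH}}(y)}=\esssup_{g\in UH}\norm{\pi_{\alpha_g}(y)}\leq\norm{\pi_{\alpha_{\mid U}}(y)},\]
whence $\ldeux(\alpha_{\mid UH})\prec\ldeux(\alpha_{\mid U})$, completing the weak equivalence. I do not foresee significant obstacles: the argument is a direct assembly of the tensor product calculus for bimodules associated to automorphisms with the direct integral description of $\ldeux(\alpha_{\mid A})$. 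The one mildly technical point is extending the essential supremum formula from elementary tensors (where it is stated in Proposition~\ref{prop:weak equivalence with essential image}) to all of $M\otimes_{\max}M^{\op}$, which follows by standard decomposable operator theory and continuity of the representation.
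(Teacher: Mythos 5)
Your proof is correct and is essentially the paper's argument: both directions rest on the same two ingredients, namely the decomposable-operator/essential-image description of $\ldeux(\alpha_{\mid A})$ (Proposition~\ref{prop:weak equivalence with essential image}) and the identity $\ldeux(\alpha_{uh})\simeq\ldeux(\alpha_u)\otimes_M\ldeux(\alpha_h)\sim\ldeux(\alpha_u)$ for $h\in H$. The paper merely packages the two weak containments into a single chain of weak equivalences between direct sums, whereas you handle the easy direction by the inclusion $U\subset UH$ and the hard one by the pointwise bound plus the essential supremum formula; this is a cosmetic rather than substantive difference.
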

\begin{proof} Using proposition \ref{prop:weak equivalence with essential image} and the fact that $\essim(\alpha_{\mid UH})=\alpha(UH)$ because $U$ is open, we obtain: 
\begin{align*}
    \ldeux(\alpha_{\mid UH})
        &\sim\bigoplus_{\theta\in UH}\ldeux(\theta)\sim \bigoplus_{g\in U, h\in H}\ldeux(\alpha_{gh})\sim \bigoplus_{g\in U, h\in H}\ldeux(\alpha_g)\otimes_M\ldeux(\alpha_h)\\
        &\sim \bigoplus_{g\in U, h\in H}\ldeux(\alpha_g)\otimes_M\ldeux(\id)\sim \bigoplus_{g\in U}\ldeux(\alpha_g)\\
        &\sim \ldeux(\alpha_{\mid U}).
\end{align*}
\end{proof}

\paragraph*{Unitary implementation of automorphisms.}
Given $\phi\in M_*^+$, we denote by $\phi^{1/2}$ the unique vector in $\ldeux M^+$ such that for all $x\in M$, $\phi(x)=\ps{x\phi^{1/2}}{\phi^{1/2}}$. When $\theta$ is an automorphism of $M$, there exists a unique unitary $U_{\theta}\in B(\ldeux M)$ such that for all $x\in M$, $\theta(x)=U_{\theta} xU_{\theta}^*$: we denote by $\kappa: \Aut(M)\to \U(B(\ldeux M)), \theta\mapsto U_{\theta}$ this \emph{unitary implementation}. When $\alpha:G\acts M$ is a continuous action on $M$, the unitary implementation extends $\alpha$ in a continuous action $\tilde{\alpha}:G\acts B(\ldeux M)$ through $\tilde{\alpha}_g(x):=\kappa(\alpha_g)x\kappa(\alpha_g)^*$. For $a,b\in M$, $X\subset G$, $\tilde{\alpha}_X(\pi_{\id}(a\otimes b^{\op}))=\frac{1}{\mu(X)}\int_{X}\pi_{\id}(\alpha_h(a)\otimes \alpha_h(b)^{\op})d\mu(h)$ (cf \cite[Thm 3.2]{Haagerup_1975}). In particular, $\tilde{\alpha}_X(\pi_{\id}(a\otimes 1))=\pi_{\id}(\alpha_X(a)\otimes 1)$ and $\tilde{\alpha}_X(\pi_{\id}(1\otimes b^{\op}))=\pi_{\id}(1\otimes \alpha_X(b)^{\op})$.
Note also that for all $a,b\in M_{\alpha}$, $\pi_{\id}(a\otimes b^{\op})\in B(\ldeux M)_{\tilde{\alpha}}$.

\paragraph*{Spectral gaps.}\label{paragraph: spectral gaps}

Let $M$ be a $\II_1$ factor. Given an $M$-$M$-bimodule $\K$, it is particularly interesting to check if $\K$ weakly contains $\ldeux(\id)$. For any $\epsilon>0$ and any finite subset $F$ of $M$, consider the subset of $\Bimod(M,M)$ defined by \[W(\epsilon, F)\doteq\left\{ \text{$M$-$M$-bimodules }(\H,\pi_{\H})\mid \exists \xi\in \H,  \sum_{x\in F}\norm{x\cdot\xi-\xi\cdot x}_2^2<\epsilon \norm{\xi}_2^2\right\} .\] Then (see \cite[Proposition 13.3.7]{anantharaman2017introduction}) $\ldeux(\id)\nprec (\K,\pi_{\K})$ if and only if there exists $\epsilon$ and $F$ such that $(\K,\pi_{\K})\notin W(\epsilon, F)$. This proposition allows us to interpret $\ldeux(\id)\nprec (\K,\pi_{\K})$ as a \emph{spectral gap} property. Applied to bimodules associated to automorphisms, it gives: 
\begin{Prop}\label{prop:spectral gap for Borel family of automorphisms}
 Let $M$ be a $\II_1$ factor, and $A$ be a subset of $\Aut(M)$. 
 Then the following statements are equivalent: 
    \begin{enumerate}
        \item There exists $\epsilon>0$ and $a_1,...,a_n$ in $M$ such that for all $\theta$ in $A$ and for all $\xi$ in $\ldeux M$, \[\sum_{i}\norm{\theta(a_i)\xi-\xi a_i}_2^2\geq \epsilon \norm{\xi}_2^2. \]\label{itm:r1}
        \item $\ldeux(\id)\nprec \bigoplus_{\theta\in A}\ldeux(\theta).$ \label{itm:r2}
    \end{enumerate}
\end{Prop}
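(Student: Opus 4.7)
The plan is to reduce the proposition to the $W(\epsilon,F)$-criterion for $\ldeux(\id)\nprec\cdot$ recalled in the paragraph just above the statement, applied to the single bimodule $\K\doteq\bigoplus_{\theta\in A}\ldeux(\theta)$, and then to unpack what this condition says on a Hilbert direct sum. By that criterion, statement~\ref{itm:r2} is equivalent to the existence of $\epsilon>0$ and a finite subset $F=\{a_1,\dots,a_n\}\subset M$ such that $\K\notin W(\epsilon,F)$, i.e.\ for every $\xi\in\K$, $\sum_i\norm{a_i\cdot\xi-\xi\cdot a_i}_2^2\geq \epsilon\norm{\xi}_2^2$.

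Next I would unpack this on the direct sum. A vector in $\K$ is a square-summable family $(\xi_\theta)_{\theta\in A}$ in $\ldeux M$; the bimodule action is coordinate-wise and both sides of the inequality split additively over $\theta$. Testing with $\xi$ concentrated in a single coordinate shows that the uniform inequality on $\K$ is equivalent to the same bound holding in each $\ldeux(\theta)$, $\theta\in A$, with identical constants $\epsilon,F$; conversely, summing termwise recovers the direct sum version. Unwrapping the bimodule structure on $\ldeux(\theta)$ into explicit left and right multiplications on $\ldeux M$ then translates the per-$\theta$ inequality into the uniform spectral gap formula of statement~\ref{itm:r1}. Should the sign convention appear to produce $\theta^{-1}$ in place of $\theta$, one uses that $\overline{\ldeux(\theta)}\simeq\ldeux(\theta^{-1})$ together with Proposition~\ref{Prop:continuity of operations on bimodules} to replace $A$ by $A^{-1}$ in~\ref{itm:r2} without changing the weak containment.

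There is no real obstacle here: the proposition is essentially a direct reformulation of the cited $W(\epsilon,F)$-criterion applied to the bimodule $\bigoplus_{\theta\in A}\ldeux(\theta)$. The only conceptual point is that a spectral gap on a direct sum is equivalent to a spectral gap on each summand with the same constants, which follows immediately from the coordinate-wise nature of the bimodule action and the additivity of the Hilbert norm.
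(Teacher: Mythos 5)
Your proposal is correct and is essentially the argument the paper intends: the paper states the proposition as a direct application of the $W(\epsilon,F)$-criterion of \cite[Proposition 13.3.7]{anantharaman2017introduction} to the bimodule $\bigoplus_{\theta\in A}\ldeux(\theta)$, which is exactly your reduction, including the coordinate-wise unpacking. Your handling of the $\theta$ versus $\theta^{-1}$ convention (passing to $A^{-1}$ via the contragredient and Proposition~\ref{Prop:continuity of operations on bimodules}, using $\overline{\ldeux(\id)}\simeq\ldeux(\id)$) correctly closes the only small gap one could worry about.
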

In particular:
\begin{itemize}
    \item Applied to $A=\{\theta_0\}$ a single automorphism, item~$\ref{itm:r1}$ is a spectral gap property because it says that $0$ is not in the closure of the spectrum of the operator $\sum_{i=1}^n\abs{\lambda(\theta_0(a_i))-\rho(a_i)}^2$. Then proposition~\ref{prop:spectral gap for Borel family of automorphisms} is the analog of the classical fact that a group representation has spectral gap if and only if it does not weakly contain the trivial representation. 
    \item Using Proposition~\ref{prop:weak equivalence with essential image}, if $(X,\nu)$ is a $\sigma$-finite measure space and $\chi:(X,\nu)\to \Aut(M)$ is a Borel map, then $\ldeux(\id)\nprec \ldeux(\chi)$ if and only if there exists $\epsilon>0$ and $a_1,...,a_n$ in $M$ such that for all $\theta$ in $\essim(\chi)$ and for all $\xi$ in $\ldeux M$, \[\sum_{i}\norm{\theta(a_i)\xi-\xi a_i}_2^2\geq \epsilon \norm{\xi}_2^2.\]
\end{itemize}

Recall that, when $M$ is a $\II_1$ factor, $\theta\notin \overline{\Inn}(M)$ (i.e. $\theta$ is topologically outer) if and only if $\theta$ exhibits the spectral gap properties of Proposition~\ref{prop:spectral gap for Borel family of automorphisms}:

\begin{The}[{\textmd{\cite[Theorem 3.1]{connes76}}}]\label{the:spectral gap caracterization of topological outerness} Let $M$ be a factor, and let $\theta\in \Aut(M)$. Consider the two following statements:
\begin{enumerate}
    \item $\ldeux(\id)\nprec \ldeux(\theta)$ ($\theta$ is spectrally outer).\label{itm:p3}
    \item $\theta\notin \overline{\Inn}(M)$ ($\theta$ is topologically outer).\label{itm:p1}
\end{enumerate}
Then $\ref{itm:p3}\implies\ref{itm:p1}$, and if $\theta$ is a trace-preserving automorphism of a type $\II$ factor, then $\ref{itm:p1}\implies\ref{itm:p3}$.
\end{The}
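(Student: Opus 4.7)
The theorem splits into two implications of very different character, and I would attack them separately.

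\textbf{Direction $\ref{itm:p3}\Rightarrow\ref{itm:p1}$.} I would argue by contrapositive: assume $\theta\in\overline{\Inn}(M)$, so there is a net $(u_i)\subset\U(M)$ with $\Ad(u_i)\to\theta$ in the $u$-topology, and produce a state witnessing $\ldeux(\id)\prec\ldeux(\theta)$ via Proposition~\ref{prop:equivalent_caracterisation_weak_containment}. Fix a faithful normal state $\phi\in M_*^+$ with implementing vector $\xi_\phi=\phi^{1/2}\in\ldeux(M)^+$, which is cyclic for the standard bimodule action. Define states $\omega_i$ on $B(\ldeux M)$ by $\omega_i(T)=\ps{Tu_i^*\xi_\phi}{u_i^*\xi_\phi}$ and let $\omega$ be any weak-$*$ cluster point. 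Using unitarity of $u_i$,
\[
\omega_i(\pi_\theta(a\otimes b^{\op}))=\ps{\theta^{-1}(a)u_i^*\xi_\phi b}{u_i^*\xi_\phi}=\ps{\Ad(u_i)(\theta^{-1}(a))\xi_\phi b}{\xi_\phi}.
\]
Since $u$-topology convergence implies pointwise $\sigma$-weak convergence of the automorphisms, $\Ad(u_i)(\theta^{-1}(a))\to a$ $\sigma$-weakly, and the right-hand side tends to $\ps{a\xi_\phi b}{\xi_\phi}=\ps{\pi_{\id}(a\otimes b^{\op})\xi_\phi}{\xi_\phi}$. Applying Proposition~\ref{prop:equivalent_caracterisation_weak_containment} with the cyclic vector $\xi_\phi$ yields $\ldeux(\id)\prec\ldeux(\theta)$.

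\textbf{Direction $\ref{itm:p1}\Rightarrow\ref{itm:p3}$ in the type $\II$, trace-preserving case.} Assume $M$ is type $\II$ with faithful normal semifinite trace $\tau$ preserved by $\theta$, and $\ldeux(\id)\prec\ldeux(\theta)$. By Proposition~\ref{prop:spectral gap for Borel family of automorphisms}, for every finite $F\subset M$ and every $\epsilon>0$ there exists a unit vector $\xi\in\ldeux M$ with $\sum_{a\in F}\norm{\theta^{-1}(a)\xi-\xi a}_2^2<\epsilon$. Identifying $\ldeux M$ with $L^2(M,\tau)$, I would approximate $\xi$ in 2-norm by some $b\in M$ (first reducing to a corner of finite trace in the $\II_\infty$ case) so that $\theta^{-1}(a)b\approx_2 ba$ holds with a controlled error. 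Taking the adjoint relation $a^*b^*\approx_2 b^*\theta^{-1}(a)^*$ into account, one deduces that $b^*b$ nearly commutes with $F$:
\[
ab^*b\approx b^*\theta^{-1}(a)b\approx b^*ba.
\]
Polar decompose $b=u|b|$; since $M$ is a type $\II$ factor, the partial isometry $u\in M$ extends to a unitary $\tilde u\in\U(M)$. The near-centrality of $|b|^2$, transferred to $|b|$ via continuous functional calculus, lets one push the intertwining relation through the polar decomposition to obtain $\norm{\theta^{-1}(a)-\Ad(\tilde u)(a)}_\phi<\epsilon'$ for $a\in F$ and some normal state $\phi$. Varying $F$ and $\epsilon$ produces a net of unitaries realizing $\theta^{-1}\in\overline{\Inn}(M)$, and since $\overline{\Inn}(M)$ is stable under inversion, $\theta\in\overline{\Inn}(M)$.

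\textbf{Main obstacle.} The delicate point is entirely in the reverse direction, namely extracting a genuine unitary $\tilde u$ whose inner automorphism approximates $\theta$ from only an almost-invariant $L^2$ vector. The polar decomposition step is clean only when $|b|$ stays away from zero; on the small spectral part of $|b|$ the construction must be regularized, and one has to bound the trace of the low-spectral projection using $\tau(b^*b)=1$ combined with the near-centrality of $|b|$ to show that this regularization costs only a small 2-norm error. In the $\II_\infty$ case, an additional preliminary step is required to localize the almost-invariant vector inside a finite-trace corner before the polar decomposition argument can be deployed.
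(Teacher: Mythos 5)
Your proof of $\ref{itm:p3}\Rightarrow\ref{itm:p1}$ is essentially the paper's argument: same contrapositive, same states $\omega_i(T)=\ps{Tu_i^*\xi}{u_i^*\xi}$ built from a cyclic vector in the standard form, same weak-$*$ cluster point, same appeal to Proposition~\ref{prop:equivalent_caracterisation_weak_containment}; using $\phi^{1/2}$ for a faithful normal state instead of the trace vector is harmless. The problem is the converse direction. The paper does \emph{not} reprove the $\II_1$ statement: for $\II_1$ factors the implication $\ref{itm:p1}\Rightarrow\ref{itm:p3}$ is exactly Connes' \cite[Theorem 3.1]{connes76}, which is cited, and the only thing actually proved in the paper is the reduction of the trace-preserving $\II_\infty$ case to the $\II_1$ case. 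You instead attempt to prove the $\II_1$ case from scratch, and your sketch has a genuine gap precisely at its crux: converting an almost twisted-central vector $\xi\in\ldeux M$ into a unitary $\tilde u$ with $\Ad(\tilde u)$ close to $\theta^{-1}$. Approximating $\xi$ in $\norm{\cdot}_2$ by $b\in M$ gives no control on $\norm{b}_\infty$, so the estimates $\theta^{-1}(a)b\approx_2 ba$ and $ab^*b\approx b^*ba$ degrade by unbounded factors; and even granting near-centrality of $\abs{b}$, the polar decomposition step is not justified: $\abs{b}$ may carry large spectral mass near $0$, the relation $\theta^{-1}(a)u\abs{b}\approx u\abs{b}a$ cannot simply be ``divided'' by $\abs{b}$ to give that $u$ itself nearly intertwines, and extending the partial isometry $u$ to a unitary requires showing the defect projections are small in trace, which you do not establish. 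This passage from $\ldeux$-almost-invariance to approximate innerness is the hard content of Connes' theorem, and your ``main obstacle'' paragraph acknowledges it without resolving it; as written the argument does not constitute a proof.

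Your handling of the $\II_\infty$ case is also not the paper's reduction and is left vague: ``localizing the almost-invariant vector inside a finite-trace corner'' is not meaningful until $\theta$ has been modified to preserve such a corner, and weak containment does not obviously restrict to a corner that $\theta$ moves. The paper's route is different and concrete: using trace-preservation, it inductively builds a partition of unity $(p_n)_{n\in\nn}$ into finite-trace projections and a single unitary $u$ with $\Ad(u)\circ\theta(p_n)=p_n$ for all $n$, sets $\beta=\Ad(u)\circ\theta$, notes that $\ldeux(\id)\prec\ldeux(\beta)$ passes to each corner $p_nMp_n$, applies the cited $\II_1$ theorem there to get $\beta_{p_nMp_n}\in\overline{\Inn}(p_nMp_n)$, and patches the approximating unitaries $v_k=\sum_n v_k^n$ to conclude $\beta$, hence $\theta$, lies in $\overline{\Inn}(M)$. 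To repair your write-up, either cite Connes for the $\II_1$ case and carry out this corner-and-patch reduction, or supply a genuinely complete proof of the $\ldeux$-to-unitary conversion.
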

However there exist topologically outer automorphisms that are not spectrally outer, for instance trace scaling automorphisms of the injective $\II_{\infty}$ factor (see \cite[Introduction]{marrakchi2020full} for examples of such automorphisms on type $\III$ factors).
\begin{proof}
    $\ref{itm:p3}\implies\ref{itm:p1}$. Let $\theta\in\overline{\Inn}(M)$ and choose a net $(u_i)_{i\in I}$ of unitaries such that $\Ad(u_i)\to \theta$. Denote by $\hat{1}$ the standard cyclic unit vector in $\ldeux(M)$. Consider the net of states $(\omega_i)_{i\in I}$ defined by $\omega_i(x)=\ps{x u_i^*\hat{1}}{u_i^*\hat{1}}$ for $x\in \B(\ldeux(M))$. Then for $a,b\in M$, 
    \[\omega_i(\pi_{\theta}(a\otimes b^{\op}))=\ps{\theta^{-1}(a)u_i^*\hat{1}b}{u_i^*\hat{1}}=\ps{u_i\theta^{-1}(a)u_i^*\hat{1}b}{\hat{1}}\tend{i}{\infty}\ps{a\hat{1}b}{\hat{1}}.\]
    Therefore, any weak-$*$ accumulation point $\omega$ of $(\omega_i)_{i\in I}$ will verify $\omega(\pi_{\theta}(a\otimes b^{\op}))=\ps{a\hat{1}b}{\hat{1}}$ for all $a,b\in M$ and therefore, by Proposition \ref{prop:equivalent_caracterisation_weak_containment}, $\ldeux(\id)\prec \ldeux(\theta)$.

    $\ref{itm:p1}\implies\ref{itm:p3}$(for $\II_1$ factors). This is \cite[Theorem 3.1]{connes76}.

    $\ref{itm:p1}\implies\ref{itm:p3}$(for trace preserving actions on semifinite factors). Let $\theta\in \Aut(M)$ a trace-preserving automorphism of the $\II_{\infty}$ factor $(M,\tau)$ such that $\ldeux(\id)\prec \ldeux(\theta)$. Take $p_1\in M$ a projection with $\tau(p_1)=1$:  $\tau(\theta(p_1))=\tau(p_1)$ so there exists a unitary $u_1\in M$ such that $\Ad(u_1)\circ \theta (p_1)=p_1$. Then $\Ad(u_1)\circ\theta$ stabilizes $(1-p_1)M(1-p_1)$, and therefore we can find $p_2<1-p_1$ with $\tau(p_2)=1$ and $u_2$ a unitary in $M$ such that $\Ad(u_2u_1)\circ\theta(p_1)=p_1$ and $\Ad(u_2u_1)\circ\theta(p_2)=p_2$. Inductively, we may consider a partition of the unit $(p_n)_{n\in \nn}$ and a unitary $u\in \U(M)$ such that $\Ad(u)\circ\theta (p_i)=p_i$ for all $i\in \nn$. Put $\beta=\Ad(u)\circ\theta\in \Aut(M)$. Then $\ldeux(\id)\prec \H(\beta_{p_iMp_i})$ for $i$ in $\nn$, and thus, applying~\ref{itm:p1}$\implies$\ref{itm:p3} in the $\II_1$ setting, $\beta_{p_iMp_i}\in \overline{\Inn}(p_iMp_i)$. Therefore, there exists a sequence of unitaries $(v_{n}^{i})_{n\in \nn}$ in $p_iMp_i$ such that $v_n^i\tend{n}{\infty}\beta_{p_i Mp_i}$ for all $i\in\nn$. Taking $v_n=\sum_{i\in \nn} v_n^i$, we get $\beta\in \overline{\Inn}(M)$. But then $\theta=\Ad(u^*)\circ \beta$ is in $\overline{\Inn}(M)$ too. 
\end{proof}

Take $M$ a factor: given an element $\theta\in \Aut(M)\setminus \overline{\Inn}(M)$, $\theta$ admits a neighborhood $\O$ consisting of topologically outer automorphisms. If $M$ is a $\II_1$ factor, we can apply Theorem~\ref{the:spectral gap caracterization of topological outerness} and Proposition~\ref{prop:spectral gap for Borel family of automorphisms} to each $\beta\in \O$ independently, which gives us $\epsilon_{\beta}>0$ and $\{a_1^{\beta},...,a_{n_{\beta}}^{\beta}\}$ characterizing the spectral gap of $\beta$. It is a natural question to ask if we can obtain a uniform spectral gap, i.e. obtaining spectral gap with the same $\epsilon>0$ and the same finite set $\{a_1,...,a_n\}$ for all automorphisms in a neighborhood of $\theta$. More specifically, is it possible to establish, for some families of factors (not necessarily of type $\II_1$), whether one of the following uniform spectral gap properties is verified?
\begin{description}
    \item[P1]\label{P1} For all $\theta\in \Aut(M)\setminus \overline{\Inn}(M)$, there is a neighborhood $\V$ of $\underline{p}(\theta)$ in $\underline{\Out}(M)$ such that 
        \[\ldeux(\id)\nprec \bigoplus_{\beta\in \underline{p}^{-1}\V}\ldeux(\beta).\]
    
    Equivalently in the $\II_1$ factor case,  there exist $a_1,...,a_n\in M$, $\epsilon>0$ such that for all $\beta\in\underline{p}^{-1}(\V)$ and $\xi \in \ldeux M$,  \[ \sum_{i=1}^n \norm{\beta(a_i)\xi-\xi a_i}_2^2\geq \epsilon \norm{\xi}_2^2.\]
    
    \item[P2]\label{P2} For all neighborhoods $\V$ of $1$ in $\underline{\Out}(M)$,
    \[\ldeux(\id)\nprec \bigoplus_{\beta\in \Aut(M)\setminus \underline{p}^{-1}(\V) }\ldeux(\beta).\]
    
    Equivalently in the $\II_1$ factor case, there exist $a_1,...,a_n\in M$, $\epsilon>0$ such that for all $\beta\notin \underline{p}^{-1}(\V)$ and $\xi \in \ldeux M$,  \[ \sum_{i=1}^n \norm{\beta(a_i)\xi-\xi a_i}_2^2\geq \epsilon \norm{\xi}_2^2.\]
\end{description}
Note that \hyperref[P2]{P2} implies \hyperref[P1]{P1}. 

The relevance of these properties is supported by the following results:
\begin{The}[{\textmd{\cite[Lemma 4]{jones_centralsequences_1982}}}]\label{prop:uniform spectral gap}
 Let $M$ be a full $\II_1$ factor. Then $M$ has \hyperref[P2]{P2}.
\end{The}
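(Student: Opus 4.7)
The plan is to argue by contradiction. Since $M$ is full, $\Inn(M)$ is closed, so $\underline{\Out}(M) = \Out(M)$ and $\underline{p} = p$. Suppose P2 fails for some open neighborhood $\V$ of $1 \in \Out(M)$. By Proposition~\ref{prop:spectral gap for Borel family of automorphisms} and separability of $M_*$, a diagonal argument produces automorphisms $\beta_n \in \Aut(M) \setminus p^{-1}(\V)$ and unit vectors $\xi_n \in \ldeux M$ satisfying $\|\beta_n(a)\xi_n - \xi_n a\|_2 \to 0$ for every $a \in M$. The strategy is to manufacture from the $\xi_n$ unitaries $u_n \in \U(M)$ such that $\Ad(u_n^*) \circ \beta_n \to \id$ in the $u$-topology; then $p(\beta_n) = p(\Ad(u_n^*) \circ \beta_n) \to 1 \in \V$, contradicting $\beta_n \notin p^{-1}(\V)$.

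Consider the polar decomposition $\xi_n = v_n h_n$, with $v_n \in M$ a partial isometry and $h_n = (\xi_n^*\xi_n)^{1/2} \in \ldeux M^+$ of $\ldeux$-norm one. Using the approximate intertwining together with its adjoint (and $\|\xi_n\|_2 = 1$), one checks that $\xi_n^*\xi_n = h_n^2 \in \lun M$ is asymptotically central: $\|[a, h_n^2]\|_1 \to 0$ for every $a \in M$. At this point fullness of $M$ is used decisively: Connes' characterization of non-$\Gamma$ (full) $\II_1$ factors provides $b_1, \dots, b_k \in M$ and $c > 0$ such that $\sum_i \|[b_i, y]\|_2^2 \geq c \|y - \tau(y)1\|_2^2$ for every $y \in M$. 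Applying this to the spectral truncations $h_n^{(R)} := h_n \wedge R \in M$, and combining with the uniform $\ldeux$-tail bound $\|h_n - h_n^{(R)}\|_2 \to 0$ as $R \to \infty$ (which follows from $\tau(h_n^2) = 1$), one concludes that $h_n \to 1$ in $\ldeux M$.

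From $h_n \to 1$ in $\ldeux M$ one deduces that the source projection $v_n^*v_n$ (the support of $h_n$) converges to $1$ strongly. Since $M$ is a $\II_1$ factor, we may extend each $v_n$ to a unitary $u_n \in \U(M)$, the complementary partial isometry contributing a negligible $\ldeux$-error; a short estimate then yields $\|\xi_n - u_n\|_2 \to 0$. Substituting into the approximate intertwining gives $\|\Ad(u_n^*)\beta_n(a) - a\|_2 \to 0$ for every $a \in M$. Because for trace-preserving automorphisms of a $\II_1$ factor pointwise $\ldeux$-convergence on $M$ is equivalent to convergence in the $u$-topology on $\Aut(M)$, this means $\Ad(u_n^*) \circ \beta_n \to \id$ in $\Aut(M)$, and continuity of $p$ closes the argument.

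The main technical obstacle is the middle paragraph: promoting the $\lun$-asymptotic centrality of $h_n^2$ to a genuine $\ldeux$-convergence $h_n \to 1$. The natural formulation of Connes' spectral gap lives in $\ldeux$ and requires bounded elements, whereas $h_n$ is only controlled in $\ldeux$-norm; one must truncate, verify that the truncated $h_n^{(R)}$ inherit the asymptotic centrality with controlled error (going back and forth between $\ldeux$ and $\lun$), and finally recover the $\ldeux$-convergence of $h_n$ itself. All remaining steps are standard: polar decomposition in $\ldeux$, extension of partial isometries to unitaries in a $\II_1$ factor, and equivalence of pointwise $\ldeux$-convergence with the $u$-topology for trace-preserving automorphisms.
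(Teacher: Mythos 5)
There is a genuine gap at the step you yourself single out as the crux. The claimed uniform tail bound --- that $\norm{h_n-h_n^{(R)}}_2\to 0$ as $R\to\infty$ ``follows from $\tau(h_n^2)=1$'' --- is false as a statement uniform in $n$: a bound on $\tau(h_n^2)$ gives no uniform integrability of the $h_n^2$ (think of $h_n^2=n\,p_n$ with $\tau(p_n)=1/n$; then for fixed $R$ the truncation error stays bounded away from $0$). Without uniformity in $n$ you cannot interchange the limits $n\to\infty$ and $R\to\infty$, and the information ``$h_n^{(R)}$ is nearly scalar for each fixed $R$'' is compatible with $h_n$ being nowhere near $\hat 1$, so the conclusion $h_n\to\hat 1$ in $\ldeux M$ does not follow from the argument as written. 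Note also that the stated reason for truncating is not the real obstruction: the fullness inequality $\sum_i\norm{[b_i,y]}_2^2\geq c\,\norm{y-\tau(y)1}_2^2$ extends from $y\in M$ to all $y\in\ldeux M$ by $\norm{\cdot}_2$-continuity and density, so boundedness of the test element is never an issue. The genuine difficulty is converting the $\lun$-asymptotic centrality of $h_n^2$ into $\ldeux$-asymptotic centrality of $h_n$, and the classical way to do this is the Powers--St\o rmer inequality: arrange (as one may, up to changing the constant) that the spectral-gap elements are unitaries $u_1,\dots,u_k$, and use $\norm{u_ih_nu_i^*-h_n}_2^2\leq\norm{u_ih_n^2u_i^*-h_n^2}_1=\norm{[u_i,h_n^2]}_1\to 0$; then fullness gives $\norm{h_n-\tau(h_n)\hat 1}_2\to 0$ directly, and $\norm{h_n}_2=1$ forces $h_n\to\hat 1$, with no truncation at all. (An alternative classical fix uses spectral projections of $h_n^2$ and the noncommutative coarea-type estimate.) With this repair the rest of your outline --- polar decomposition, extension of $v_n$ to a unitary $u_n$ with $\norm{\xi_n-u_n}_2\to 0$, and the equivalence of pointwise $\norm{\cdot}_2$-convergence with the $u$-topology for trace-preserving automorphisms --- goes through.

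Two smaller remarks. First, the paper does not prove this theorem at all: it is quoted from Jones, and your architecture is essentially Jones's original argument (built on Connes's non-$\Gamma$ spectral gap), so up to the repair above you are reconstructing the cited proof rather than giving a new one. Second, be careful with ``for every $a\in M$'': the diagonal argument only yields the approximate intertwining along a countable family (you cannot upgrade by $\norm{\cdot}_2$-density, since $\xi_n$ is an unbounded vector and the error term needs the operator norm); this is harmless because the proof only uses the finitely many $u_i$ from fullness plus a $\norm{\cdot}_2$-dense sequence of the unit ball for the last step, but it should be said that way. Likewise your opening reduction ($\Inn(M)$ closed, $\underline{\Out}(M)=\Out(M)$) and the separability you invoke are not part of the statement as given in the paper, though they are consistent with Jones's original separable setting.
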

Using Proposition~\ref{prop:weak equivalence with essential image} and noticing that separability assumptions can be avoided in the proof, we can rephrase \cite[Lemma 1]{marrakchivaesSpectral} as the following generalization of Jones' result: 
\begin{The}[\textmd{{\cite[Lemma 1]{marrakchivaesSpectral}}}]\label{prop:uniform spectral gap marrakchivaes}
    Let $M$ be a full $\sigma$-finite factor. Then $M$ has \hyperref[P2]{P2}.
\end{The}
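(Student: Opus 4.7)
The plan is to argue by contradiction, combining the vector-state characterization of weak containment (Proposition~\ref{prop:equivalent_caracterisation_weak_containment}) with the fullness hypothesis on $M$.

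Suppose there is an open neighborhood $\V$ of $1$ in $\underline{\Out}(M)$ for which
\[\ldeux(\id)\prec\bigoplus_{\beta\in A}\ldeux(\beta),\qquad A := \Aut(M)\setminus\underline{p}^{-1}(\V).\]
Since $M$ is full, $\Inn(M)=\overline{\Inn}(M)$, so $\underline{p}=p$ and $A$ is an $\Inn(M)$-invariant closed subset of $\Aut(M)$ with $p(A)\not\ni 1$. Fix a faithful normal state $\phi$ on $M$ (using $\sigma$-finiteness), let $\hat\phi=\phi^{1/2}\in \ldeux M^+$ be its canonical cyclic vector for the standard bimodule $\ldeux(\id)$, and apply Proposition~\ref{prop:equivalent_caracterisation_weak_containment} to obtain a state $\omega$ on $B\bigl(\bigoplus_{\beta\in A}\ldeux M\bigr)$ satisfying
\[\omega\Bigl(\bigoplus_\beta\pi_\beta(a\otimes b^{\op})\Bigr)=\langle a\hat\phi b,\hat\phi\rangle\quad\text{for all }a,b\in M.\]

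Next, I would approximate $\omega$ in weak-$*$ by vector states $\omega_{\eta_i}$ with $\eta_i=(\eta_{i,\beta})_{\beta\in A}\in\bigoplus_{\beta\in A}\ldeux M$ of unit norm. Since each $\eta_i$ decomposes as an $\ell^2$-sum supported on countably many $\beta$'s, with weights $\|\eta_{i,\beta}\|^2$, the convergence
\[\sum_\beta\langle\beta^{-1}(a)\eta_{i,\beta}b,\eta_{i,\beta}\rangle\longrightarrow\langle a\hat\phi b,\hat\phi\rangle\]
is a weighted average over $A$; a pigeonhole / diagonal extraction along an increasing sequence of finite test sets then produces automorphisms $\beta_i\in A$ and unit vectors $\xi_i\in\ldeux M$ (taken in the support of $\eta_i$) such that
\[\langle\beta_i^{-1}(a)\xi_i b,\xi_i\rangle\longrightarrow\langle a\hat\phi b,\hat\phi\rangle\quad\text{for all }a,b\in M.\]
Setting $a=1$ and $b=1$ separately shows that the left and right vector states associated with $\xi_i$ converge weakly to $\phi$-like normal functionals on $M$, and the joint convergence exhibits $\xi_i$ as an approximate implementer of $\beta_i$ matching the modular data of $\phi$.

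The final step invokes fullness to reach a contradiction. In a full factor, such an approximate implementation of $\beta_i$ by $\xi_i$ is only possible when $\beta_i \to \id$ in $\Out(M)$: this is essentially Connes' characterization of full $\II_1$ factors, extended by Marrakchi to the $\sigma$-finite non-tracial setting. Concretely, polar-decomposing $\xi_i=u_i|\xi_i|$ in the standard form $(M,\ldeux M,J,\ldeux M^+)$ and extending the partial isometric part to a unitary via $\sigma$-finiteness yields $u_i\in\U(M)$ with $\Ad(u_i)\circ\beta_i\to\id$ in the $u$-topology of $\Aut(M)$. Then $\beta_i\in\Inn(M)\cdot\V=\underline{p}^{-1}(\V)$ for $i$ large enough, contradicting $\beta_i\in A$. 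The main obstacle is handling this polar-decomposition and left/right-state comparison \emph{without} a trace: one must use modular theory and the unitary implementation $\kappa:\Aut(M)\to\U(B(\ldeux M))$ recalled in the preliminaries, which is precisely the technical content of the $\sigma$-finite extension of Jones' original argument.
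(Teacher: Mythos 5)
The paper does not actually prove this statement: it is imported verbatim from \cite[Lemma 1]{marrakchivaesSpectral}, rephrased via Proposition~\ref{prop:weak equivalence with essential image} and the observation that separability of the predual can be avoided, so there is no internal proof to compare with. Judged on its own, your outline follows the natural strategy, but two intermediate steps are glossed over and the decisive step is missing. A state on $B(\K)$ is \emph{not} a weak-$*$ limit of vector states in general; what saves the first step is that you only need to approximate $\omega$ on the $C^*$-algebra generated by $\pi_{\K}(M\pta M^{\op})$, and that the target state $\ps{\,\cdot\,\phi^{1/2}}{\phi^{1/2}}\circ\pi_{\id}$ is pure, because $\pi_{\id}$ is an irreducible representation when $M$ is a factor ($M\vee JMJ=B(\ldeux M)$); Milman's converse to Krein--Milman then yields approximation by single vector states. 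Likewise, extracting one $\beta_i$ and one unit vector from the weighted combination $\sum_{\beta}\norm{\eta_{i,\beta}}^2\sigma_{i,\beta}$ is not a pigeonhole argument: you need the fact that if barycenters of probability measures on a compact convex set converge to an \emph{extreme} point, the measures concentrate near that point (again Milman). Finally, the diagonal extraction over an increasing sequence of countable test sets silently assumes a separable predual, whereas the statement is for $\sigma$-finite $M$ (removing separability is precisely the point of the paper's remark); one should work with nets indexed by pairs (finite subset of $M$, $\epsilon$) instead.

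The genuine gap is the last step. First, from $\ps{\beta_i^{-1}(a)\xi_i b}{\xi_i}\to\ps{a\phi^{1/2}b}{\phi^{1/2}}$ for all $a,b$ you do \emph{not} get $\norm{\beta_i^{-1}(a)\xi_i-\xi_i a}\to 0$ unless $\phi$ is a trace: expanding the square shows the limit of this quantity is $\norm{a\phi^{1/2}-\phi^{1/2}a}$, which vanishes only for $a$ in the centralizer of $\phi$, so ``$\xi_i$ approximately implements $\beta_i$'' already needs modular-theoretic care in the type $\III$ case. More importantly, the passage from such almost-implementing vectors for a \emph{net of different} automorphisms $\beta_i$ to unitaries $u_i\in\U(M)$ with $\Ad(u_i)\circ\beta_i\to\id$ is exactly the uniform spectral gap content of \cite[Lemma 4]{jones_centralsequences_1982} in the $\II_1$ case and of \cite[Lemma 1]{marrakchivaesSpectral} in general; the single-automorphism results \cite[Theorem 3.1]{connes76} and \cite[Proposition 7.2]{marrakchi2020full} do not by themselves give this uniform statement. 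By deferring it to ``essentially Connes, extended by Marrakchi'' you are, in effect, citing the very lemma you set out to prove: the polar-decomposition remark is where the actual work (a spectral-gap characterization of fullness valid for $\sigma$-finite, possibly type $\III$ factors, together with a Connes--St{\o}rmer-type matching of states) would have to be carried out.
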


The proposition below can be easily adapted from \cite[Proof of Theorem A]{marrakchivaesSpectral}: 
\begin{Prop}\label{prop:proof of thm A marrakchi vaes}
 Let $\alpha:G\acts M$ be a continuous topologically outer (i.e. $\alpha^{-1}(\overline{\Inn}(M))=1$) action of a locally compact group on a factor $M$ that has \hyperref[P1]{P1}. Then $\alpha$ is strictly outer. 
\end{Prop}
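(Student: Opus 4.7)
The plan is to adapt Marrakchi-Vaes's proof of Theorem~\ref{the:marrakchivaes outer implies strictly outer}, invoking \hyperref[P1]{P1} at the step where they use Jones's uniform spectral gap (Theorem~\ref{prop:uniform spectral gap marrakchivaes}). Suppose for contradiction that there is $x\in M'\cap(M\rtimes_\alpha G)$ with $x\notin M$, and realize $M\rtimes_\alpha G$ inside $M\ptvn B(\ldeux G)$ via the standard spatial embedding, where $M$ acts as $\pi(a)=[s\mapsto \alpha_{s^{-1}}(a)]$; the assumption $x\in M'$ becomes $[x,\pi(a)]=0$ for every $a\in M$. The target is to derive $x\in M$, whence $x\in M\cap M'=\cc$ by factoriality.

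I would apply smoothing procedures to $x$ in both the $M$-direction (via the unitary implementation $\tilde\alpha$ described in the preliminaries) and in the $G$-direction (by averaging conjugation by $\{1\otimes\lambda_h\}_{h}$ against compactly supported functions), keeping in mind that such conjugations preserve membership in $M\rtimes_\alpha G$ and the commutation with $\pi(M)$ (since $1\otimes\lambda_h$ sends $\pi(a)$ to $\pi(\alpha_h(a))$). After subtracting the ``$M$-valued component at $\{1\}$'' of $x$ obtained via the canonical operator-valued weight $T\colon M\rtimes_\alpha G\to M$, the hypothesis $x\notin M$ delivers a nonzero element whose spatial kernel is essentially concentrated away from the identity of $G$. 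Using this nonzero element together with the commutation with $\pi(M)$, I would build, for some $g\ne 1$ and some open neighborhood $U$ of $g$, a unit vector $\xi\in\ldeux(U)\otimes\ldeux M$ such that the vector state $\langle\pi_{\alpha_{|U}}(\,\cdot\,)\xi,\xi\rangle$ on $M\otimes M^{\op}$ agrees with the standard state $\langle\pi_{\id}(\,\cdot\,)\phi^{1/2},\phi^{1/2}\rangle$ for a faithful $\phi\in M_*^+$; Proposition~\ref{prop:equivalent_caracterisation_weak_containment} then yields $\ldeux(\id)\prec\ldeux(\alpha_{|U})$.

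To derive the contradiction, I would invoke \hyperref[P1]{P1}: topological outerness gives $\alpha_g\notin\overline{\Inn}(M)$, so \hyperref[P1]{P1} furnishes a neighborhood $\V_g$ of $\underline{p}(\alpha_g)$ in $\underline{\Out}(M)$ with $\ldeux(\id)\nprec\bigoplus_{\beta\in\underline{p}^{-1}(\V_g)}\ldeux(\beta)$. The neighborhood $U$ above would be chosen inside $(\underline{p}\circ\alpha)^{-1}(\V_g)$, so that $\essim(\alpha_{|U})\subset\underline{p}^{-1}(\V_g)$. Proposition~\ref{prop:weak equivalence with essential image} then gives $\ldeux(\alpha_{|U})\sim\bigoplus_{\theta\in\essim(\alpha_{|U})}\ldeux(\theta)\prec\bigoplus_{\beta\in\underline{p}^{-1}(\V_g)}\ldeux(\beta)$, which together with $\ldeux(\id)\prec\ldeux(\alpha_{|U})$ contradicts \hyperref[P1]{P1}.

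The principal obstacle is the construction in the second paragraph: exhibiting the vector $\xi$ and verifying that the two vector states coincide, starting from the commutation relation $[x,\pi(a)]=0$, while ensuring by the smoothing/subtraction that the resulting data truly lives near some $g\ne 1$. This is the technical heart of Marrakchi-Vaes's argument in the full-factor setting, and carrying it out for a general $\sigma$-finite factor requires careful bookkeeping with the modular data on $\ldeux M$, the operator-valued weight $T$, and the fact that a ``Fourier coefficient'' decomposition of elements of $M\rtimes_\alpha G$ is not as explicit as in the discrete or tracial cases.
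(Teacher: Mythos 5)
Your overall strategy---rerun the Marrakchi--Vaes proof of their Theorem A and invoke \hyperref[P1]{P1} exactly where they invoke the uniform spectral gap lemma---is indeed what the paper has in mind, and your last paragraph is essentially sound, up to one small point: the essential range of $\alpha_{\mid U}$ is only contained in the \emph{closure} of $\alpha(U)$, so you should shrink $U$ to an open set with compact closure inside $\alpha^{-1}(\underline{p}^{-1}(\V_g))$ (or argue by lower semicontinuity of $\theta\mapsto\norm{\pi_{\theta}(x)}$) before applying Proposition~\ref{prop:weak equivalence with essential image}. The genuine gap is your second paragraph: the passage from a non-scalar $x\in M'\cap(M\rtimes_{\alpha}G)$ to a witness of $\ldeux(\id)\prec\ldeux(\alpha_{\mid U})$ for some open $U$ around some $g\neq 1$ is only announced, not carried out, and in the precise form you propose it is too strong. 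You ask for a single unit vector $\xi$ whose vector state equals the standard state \emph{exactly}; Proposition~\ref{prop:equivalent_caracterisation_weak_containment} only requires a state on $B(\K)$, and in every argument of this type (compare the proof of Theorem~\ref{thm:fundamental result on continuity of weak containment}) such a state is produced as a weak-$*$ limit, not as a vector state. Likewise, ``subtracting the $M$-component via the operator-valued weight $T$'' and speaking of a ``spatial kernel essentially concentrated away from the identity'' are exactly the points where, for a general locally compact $G$ and non-tracial $M$, there is no Fourier decomposition and $x$ need not lie in the domain of $T$; as written, the technical heart of the proof is missing.

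There is a shorter route that stays entirely inside this paper and avoids any spatial analysis of $x$. Note that $M'\cap(M\rtimes_{\alpha}G)\subset\End(\ldeux(\alpha))$, and that \hyperref[P1]{P1} together with topological outerness and continuity of $\alpha$ gives, for every $g\neq 1$, an open neighborhood $U_g$ of $g$ with $\ldeux(\id)\nprec\ldeux(\alpha_{\mid U_g})$ (your paragraph three, with the shrinking fix); this is precisely the conclusion of Theorem~\ref{thm:fundamental result on continuity of weak containment} with $H=\{1\}$, obtained here without $\sigma$-finiteness. One can then repeat verbatim the central-support and compact-covering argument from the proof of Theorem~\ref{thm:location of relative commutant in spectrally inner automorphisms} to get $1\otimes 1_K\in\Z(\End(\ldeux(\alpha)))$ for every compact $K\subset G$, hence $1\ptvn\linf(G)\subset\Z(\End(\ldeux(\alpha)))$, and conclude with Lemma~\ref{lem:localization of relative commutant} applied to $H=\{1\}$, which gives $M'\cap(M\rtimes_{\alpha}G)\subset M\rtimes_{\alpha}\{1\}=M$ and therefore $M'\cap(M\rtimes_{\alpha}G)\subset M'\cap M=\cc$. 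If you prefer to keep your reduction through the element $x$ itself, you must at least replace the exact vector state by a limiting state, or by a quasi-containment argument via central supports of projections in $\End(\ldeux(\alpha))$; that is where all the actual work lies, and it is the part your proposal leaves open.
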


It is the combination of Theorem~\ref{prop:uniform spectral gap marrakchivaes} and Proposition~\ref{prop:proof of thm A marrakchi vaes} that allows Marrakchi and Vaes to show that outer actions on full factors are strictly outer. Jones's result~\ref{prop:uniform spectral gap} strongly relies on the theorem by Connes that gives a spectral gap characterization of fullness \cite[Theorem 3.1]{connes76}, and thus its proof can't be generalized to arbitrary factors. We don't even know if the weaker property \hyperref[P1]{P1} holds for non-full factors. However, Theorem~\ref{thm:fundamental result on continuity of weak containment} asserts that such a uniform spectral gap can be obtained at least for locally compact group actions.

\section{Proof of the theorems}

First we establish a prolongation result for states on bimodules. 
\begin{Lem}\label{lem:prolongation result for states}        
 Let $M$ and $N$ be two von Neumann algebras. Let $(\H,\pi_{\H})$ be an $M$-$N$-bimodule endowed with a binormal state $\phi\in B(\H)^*$. Let $(\K,\pi_{\K})$ be another $M$-$N$-bimodule endowed with a state $\omega\in B(\K)^*$ such that:
    \begin{itemize}
        \item $\omega\circ\lambda_{\K}=\phi\circ\lambda_{\H}$. 
        \item $\omega\circ\rho_{\K}=\phi\circ\rho_{\H}$.
        \item there exist $*$-strongly dense subsets $M_0\subset M$ and $N_0\subset N$ such that $\omega\circ\pi_{\K}=\phi\circ\pi_{\H}$ on $M_0\pta N_0^{\op}$.
    \end{itemize} 
 Then $\omega\circ \pi_{\K}=\phi\circ \pi_{\H}$ on $M\pta N^{\op}$.
\end{Lem}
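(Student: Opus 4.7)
The plan is to pass from the equality on $M_0 \pta N_0^{\op}$ to $M \pta N^{\op}$ one variable at a time, using the Cauchy--Schwarz inequality and the normality of the marginals. The point is that, although $\omega$ need not be normal on $B(\K)$, binormality of $\phi$ together with the marginal hypotheses ensures that $\omega \circ \lambda_\K = \phi \circ \lambda_\H$ is a normal positive functional on $M$ and $\omega \circ \rho_\K = \phi \circ \rho_\H$ is a normal positive functional on $N$.

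For fixed $b \in N$, I would introduce the linear functional
\[ f_b(a) := \omega(\pi_\K(a \pt b^{\op})) - \phi(\pi_\H(a \pt b^{\op})) \]
on $M$. Writing $\pi_\K(a \pt b^{\op}) = \lambda_\K(a)\rho_\K(b^{\op})$ and applying Cauchy--Schwarz to the state $\omega$ on $B(\K)$ (and the analogous bound to $\phi$), one obtains an estimate of the form
\[ |f_b(a)|^2 \leq 4\, \phi \circ \rho_\H((b^{\op})^* b^{\op}) \cdot \phi \circ \lambda_\H(aa^*). \]
Since the second factor is $\|a^*\|_{\phi \circ \lambda_\H}^2$ for the normal state $\phi \circ \lambda_\H$, the map $f_b$ is continuous at $0$ for the $*$-strong topology on $M$, hence globally $*$-strong continuous by linearity. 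For $b \in N_0$, $f_b$ vanishes on $M_0$ by hypothesis, and $*$-strong density of $M_0$ in $M$ forces $f_b \equiv 0$ on $M$.

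Next, I fix an arbitrary $a \in M$ and consider $g_a(b) := \omega(\pi_\K(a \pt b^{\op})) - \phi(\pi_\H(a \pt b^{\op}))$ as a linear functional on $N$. A symmetric Cauchy--Schwarz bound (now exploiting the normality of $\phi \circ \rho_\H$) shows that $g_a$ is $*$-strong continuous. By the previous paragraph $g_a \equiv 0$ on $N_0$, and $*$-strong density of $N_0$ yields $g_a \equiv 0$ on $N$. Linearity in each factor then gives the desired equality $\omega \circ \pi_\K = \phi \circ \pi_\H$ on all of $M \pta N^{\op}$.

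No serious obstacle is expected: the whole argument is essentially a Cauchy--Schwarz continuity bootstrap applied separately in each variable. The only delicate point, and the reason the hypotheses are organized as they are, is that the continuity claim on $f_b$ and $g_a$ must be funnelled through the marginal functionals $\omega \circ \lambda_\K$ and $\omega \circ \rho_\K$, which are normal by the first two hypotheses, rather than through $\omega$ itself, which in general is not.
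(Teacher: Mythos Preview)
Your proposal is correct and is essentially the same argument as the paper's: both proofs reduce to a Cauchy--Schwarz estimate of the form $|\omega(\lambda_{\K}(x)\rho_{\K}(y))|\leq \|x\|_{\omega\circ\lambda_{\K}}^{\sharp}\|y\|_{\omega\circ\rho_{\K}}^{\sharp}$ (and the analogous bound for $\phi$), and then use the first two hypotheses to identify these marginal seminorms with the normal ones coming from the binormal state $\phi$. The only difference is organizational---you extend one variable at a time via the functionals $f_b$ and $g_a$, whereas the paper picks joint nets $(a_i,b_i)\in M_0\times N_0$ converging $*$-strongly to $(a,b)$ and handles both variables in a single limit---but the analytic content is identical.
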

\begin{proof}
 Let $a\in M$, $b\in N$. Consider two  nets $(a_i)_{i\in I}\in (M_0)^{I}$ and $(b_i)_{i\in I}\in (N_0)^{I}$ such that  $a_i\tend[*\text{-strongly}]{i}{\infty}a$ and $b_i\tend[*\text{-strongly}]{i}{\infty}b$. Then:
    \begin{align*}
        &\abs{\omega(\pi_{\K}(a_i\otimes b_i^{\op}))-\omega(\pi_{\K}(a\otimes b^{\op}))}\\
            &\;=\abs{\omega(\pi_{\K}((a_i-a)\otimes b_i^{\op}))+\omega(\pi_{\K}(a\otimes (b_i-b)^{\op}))}\\
            &\; \leq \norm{a_i-a}_{\omega\circ \lambda_{\K}}\norm{(b_i^{\op})^*}_{\omega\circ \rho_{\K}}+\norm{a}_{\omega\circ \lambda_{\K}}\norm{(b_i^{\op}-b^{\op})^*}_{\omega\circ \rho_{\K}}\\
            &\; \leq \norm{a_i-a}_{\phi\circ\lambda_{\H}}^{\sharp}\norm{b_i^{\op}}_{\phi\circ\rho_{\H}}^{\sharp}+\norm{a}_{\phi\circ\lambda_{\H}}^{\sharp}\norm{(b_i-b)^{\op}}_{\phi\circ\rho_{\H}}^{\sharp}\tend{i}{\infty} 0
    \end{align*} 
 
    \begin{align*} 
            &\abs{\omega(\pi_{\K}(a_i\otimes b_i^{\op}))-\phi(\pi_{\H}(a\otimes b^{\op}))}\\
            &\;=\abs{\phi(\pi_{\H}(a_i\otimes b_i^{\op}))-\phi(\pi_{\H}(a\otimes b^{\op}))}\\
            &\;\leq \abs{\phi(\pi_{\H}((a_i-a)\otimes b_i^{\op}))+\phi(\pi_{\H}(a\otimes (b_i-b^{\op})))}\\
            &\;\leq \norm{a_i-a}_{\phi\circ\lambda_{\H}}^{\sharp}\norm{b_i^{\op}}_{\phi\circ\rho_{\H}}^{\sharp}+\norm{a}_{\phi\circ\lambda_{\H}}^{\sharp}\norm{(b_i-b)^{\op}}_{\phi\circ\rho_{\H}}^{\sharp}\tend{i}{\infty} 0. 
    \end{align*}      
\end{proof}

\begin{proof}[Proof of Theorem~\ref{thm:fundamental result on continuity of weak containment}]

Let $g\in G$, and denote by $\B$ a basis of open neighborhoods of $g$ in $G$. Assume that for all $\O\in \B$, $\ldeux(\id)\prec \bigoplus_{\theta\in \O}\ldeux(\theta)$ or equivalently that $\ldeux(\id)\prec \ldeux (\alpha_{\mid\O})$. Because $M$ is $\sigma$-finite, we can choose $\phi\in M_*^+$ a faithful normal state on $M$. Using Proposition~\ref{prop:equivalent_caracterisation_weak_containment} we choose for each $\O\in \B$ a state $\omega_{\O}\in B(\ldeux(\alpha_{\mid \O}))^*$ such that $\omega_{\O}(\pi_{\alpha_{\mid\O}}(a\otimes b^{\op}))=\ps{a\phi^{1/2}b}{\phi^{1/2}}$ for all $a,b\in M$. It induces a net $(\tilde{\omega}_{\O})_{\O\in\B}$ of states in $B(\ldeux M)^*$ defined by $\tilde{\omega}_{\O}(x)\doteq \omega_{\O}(1_{\O}\otimes x)$ for $x\in B(\ldeux M)$. Consider $\tilde{\omega}$ a weak-$*$ accumulation point of $(\tilde{\omega}_\O)_{\O\in\B}$. Let $a\in M_{\alpha}$, $b\in M$: for any $\epsilon>0$, we can find $\Omega\in\B$ small enough such that for all $h\in \Omega$, $\norm{\alpha_{g}^{-1}(a)-\alpha_h^{-1}(a)}\leq \epsilon$. Then: 
        \begin{flalign*}
            \phantom{\text{For $b\in M$ :}\phantom{aaaaa}}        &\abs{\tilde{\omega}(\pi_{\alpha_g}(a\otimes b^{\op}))-\ps{a\phi^{1/2}b}{\phi^{1/2}}}& &\\
            &\;\leq\limsup_{\O\to \infty}\abs{\omega_{\O}(1_{\O}\otimes\pi_{\alpha_g}(a\otimes b^{\op}))-\omega_{\O}(\pi_{\alpha_{\mid\O}}(a\otimes b^{\op}))}& &\\
            &\;\leq\limsup_{\O\to \infty}\abs{\omega_{\O}\Big(\big[1_{\O}\otimes\pi_{\alpha_g}(a\otimes 1)-\pi_{\alpha_{\mid\O}}(a\otimes 1)\big](\pi_{\alpha_{\mid\O}}(1\otimes b^{\op}))\Big)}& &\\
            &\;\leq \limsup_{\O\to \infty}\norm{(1_{\O}\otimes\pi_{\alpha_g}(a\otimes 1)-\pi_{\alpha_{\mid\O}}(a\otimes 1))}\norm{\pi_{\alpha_{\mid\O}}(1\otimes b^{\op})}& &\\
            &\;\leq \norm{1_{\Omega}\otimes \pi_{\alpha_g}(a\otimes 1)-\pi_{\alpha_{\mid \Omega}}(a\otimes 1)}\norm{\pi_{\alpha_{\mid \Omega}}(1\otimes b^{\op})}& &\\
            &\;\leq  \epsilon \norm{1\otimes b^{\op}}.& &
        \end{flalign*}
This being true for all $\epsilon>0$, we get that for all $a\in M_{\alpha}$ and $b\in M$, 
\begin{equation}\label{eq:nice property of intermediate state}
    \tilde{\omega}(\pi_{\alpha_g}(a\otimes b^{\op}))=\ps{a\phi^{1/2}b}{\phi^{1/2}}.
\end{equation}

Now we smooth the state $\tilde{\omega}$. Let $\C$ be a base of open neighborhoods of $1$ in $G$, and consider $\omega$ an accumulation point of $(\tilde{\omega}\circ \tilde{\alpha}_{\V})_{\V\in\C}$.
\begin{flalign*} 
    \text{For $b\in M$ :}\phantom{aaaaa}
            &\abs{\omega(\pi_{\alpha_g}(1\otimes b^{\op}))-\ps{\phi^{1/2}b}{\phi^{1/2}}}& &\\
            &\;\leq \limsup_{\V\to \infty}\abs{\tilde{\omega}\circ \tilde{\alpha}_{\V}(\pi_{\alpha_g}(1\otimes b^{\op}))-\ps{\phi^{1/2}b}{\phi^{1/2}}}& &\\
            &\;\leq\limsup_{\V\to \infty}\abs{\ps{\phi^{1/2}(\alpha_{\V}(b)-b)}{\phi^{1/2}}}& &\\
            &\;=0. 
\end{flalign*}
\begin{flalign*} 
    \text{For $a\in M$ :}\phantom{aaaaa}
            &\abs{\omega(\pi_{\alpha_g}(a\otimes 1))-\ps{a\phi^{1/2}}{\phi^{1/2}}}& &\\
            &\;\leq\limsup_{\V\to \infty}\abs{\tilde{\omega}\circ \tilde{\alpha}_{\V}(\pi_{\alpha_g}(a\otimes 1))-\ps{a\phi^{1/2}}{\phi^{1/2}}}& &\\
            &\;\leq \limsup_{\V\to \infty}\abs{\ps{(\alpha_{g\V g^{-1}}(a)-a)\phi^{1/2}}{\phi^{1/2}}}& & \\
            &\;=0. 
\end{flalign*}
\begin{flalign*}
\text{For $a,b\in M_{\alpha}$:}\phantom{aaa}
            &\abs{\omega(\pi_{\alpha_g}(a\otimes b^{\op}))-\ps{a\phi^{1/2}b}{\phi^{1/2}}}
             = \abs{\omega(\pi_{\id}(\alpha_{g^{-1}}(a)\otimes b^{\op}))-\ps{a\phi^{1/2}b}{\phi^{1/2}}} & &\\
            &\; \leq\limsup_{\V\to \infty}\abs{\tilde{\omega}\left(\frac{1}{\mu(\V)}\int_{\V}\tilde{\alpha}_h[\pi_{\id}(\alpha_{g^{-1}}(a)\otimes b^{\op})]d\mu(h)\right)-\ps{a\phi^{1/2}b}{\phi^{1/2}}}& &\\
            &\; \leq \limsup_{\V\to \infty}\abs{\frac{1}{\mu(\V)}\int_{\V}\tilde{\omega}\left(\tilde{\alpha}_h[\pi_{\id}(\alpha_{g^{-1}}(a)\otimes b^{\op})]\right)d\mu(h)-\ps{a\phi^{1/2}b}{\phi^{1/2}}}& &\tag{since $\pi_{\id}(\alpha_{g^{-1}}(a)\otimes b^{\op})\in B(\ldeux M)_{\tilde{\alpha}}$}\\ 
            &\; \leq\limsup_{\V\to \infty}\abs{\frac{1}{\mu(\V)}\int_{\V}\tilde{\omega}\Big(\pi_{\alpha_g}\Big(\alpha_{ghg^{-1}}(a)\otimes \alpha_h(b)^{\op}\Big)\Big)d\mu(h)-\ps{a\phi^{1/2}b}{\phi^{1/2}}}& &\\
            &\; \leq\limsup_{\V\to \infty}\frac{1}{\mu(\V)}\int_{\V}\abs{\tilde{\omega}\Big(\pi_{\alpha_g}\Big(\alpha_{ghg^{-1}}(a)\otimes \alpha_h(b)^{\op}\Big)\Big)-\ps{a\phi^{1/2}b}{\phi^{1/2}}}d\mu(h)& &\\ 
            &\; \leq \limsup_{\V\to \infty}\frac{1}{\mu(\V)}\int_{\V} \abs{\ps{\alpha_{ghg^{-1}}(a)\phi^{1/2}\alpha_h(b)-a\phi^{1/2}b}{\phi^{1/2}}}d\mu(h)& &\tag{by \eqref{eq:nice property of intermediate state}, since $\alpha_{ghg^{-1}}(a)\in M_{\alpha}$ }\\
            &\; \leq \limsup_{\V\to \infty}\sup_{h\in \V}\left(\Vert\alpha_{ghg^{-1}}(a)-a\Vert\norm{\alpha_h(b)}+\norm{a}\norm{\alpha_h(b)-b}\right)& &\\
            &\; =0.
\end{flalign*}
Now we can apply lemma~\ref{lem:prolongation result for states}: $\omega\circ \pi_{\alpha_g}=\phi\circ\pi_{\id}$ on $M\odot M^{\op}$ and thus $\ldeux(\alpha_g)$ weakly contains $\ldeux(\id)$ by proposition~\ref{prop:equivalent_caracterisation_weak_containment}.
\end{proof}
The following result explains how structure on the algebra of intertwiners of the action bimodule can be used to locate the relative commutant $M'\cap (M\rtimes_{\alpha}G)$ in the crossed product by a subgroup $M\rtimes_{\alpha}H$. 
\begin{Lem}\label{lem:localization of relative commutant}
 Let $\alpha:G\acts M$ be a continuous action of a locally compact group on a factor. Let $H<G$ be a closed subgroup such that $1\ptvn \linf(H\setminus G)\subset \Z(\End(\ldeux(\alpha)))$. Then 
 \[M'\cap (M\rtimes_{\alpha}G)\subset M\rtimes_{\alpha} H.\]
\end{Lem}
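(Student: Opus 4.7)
The plan is to use the canonical representation $\alpha : M\rtimes_\alpha G \to M\ptvn B(\ldeux G)$ to view any $x\in M'\cap(M\rtimes_\alpha G)$ as an intertwiner of the bimodule $\ldeux(\alpha)$, then combine the centrality hypothesis with the standard crossed-product characterisation of $M\rtimes_\alpha H$ inside $M\rtimes_\alpha G$.

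First I would identify $\ldeux(\alpha)=\ldeux(G,\mu)\otimes\ldeux M$ with the underlying Hilbert space of the canonical representation (up to the flip of tensor factors). Under this identification, the left action $\pi_\alpha(a\otimes 1)$ on $\ldeux(\alpha)$ coincides with $\alpha(a)=[s\mapsto\alpha_{s^{-1}}(a)]\in M\ptvn\linf(G)$, while the right action $\pi_\alpha(1\otimes b^{\op})$ is fibrewise right multiplication by $b$ on the $\ldeux M$-factor, which commutes with every element of $M\ptvn B(\ldeux G)$ (right multiplication on $\ldeux M$ being the commutant of $M$ in $B(\ldeux M)$). Consequently, for $x\in M'\cap(M\rtimes_\alpha G)$, the operator $\alpha(x)$ automatically commutes with $\pi_\alpha(1\otimes M^{\op})$, and commutes with $\pi_\alpha(M\otimes 1)=\alpha(M)$ because $x\in M'$. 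Hence $\alpha(x)\in \End(\ldeux(\alpha))$, and by the centrality hypothesis $\alpha(x)$ commutes with $1\ptvn M_\phi$ for every $\phi\in\linf(H\setminus G)$.

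The second step is to invoke the identity
\[
\alpha(M\rtimes_\alpha G)\cap \bigl(1\ptvn\linf(H\setminus G)\bigr)' = \alpha(M\rtimes_\alpha H).
\]
The inclusion $\supset$ is immediate: $\alpha(M)\subset M\ptvn\linf(G)$ commutes with the abelian algebra $1\ptvn\linf(G)\supset 1\ptvn\linf(H\setminus G)$, while $\alpha(u_h)=1\ptvn\lambda_h$ for $h\in H$ commutes with $1\ptvn M_\phi$ precisely because $\phi$ is $H$-left-invariant. The reverse inclusion is a classical Fourier-support result: the commutant of $1\ptvn\linf(H\setminus G)$ in $M\ptvn B(\ldeux G)$ equals $M\ptvn(\linf(G)\rtimes_{\mathrm{lt}}H)$, and intersecting this with $\alpha(M\rtimes_\alpha G)$ cuts down to exactly $\alpha(M\rtimes_\alpha H)$ (in the spirit of Takesaki's duality, \emph{Theory of Operator Algebras II}, Chapter X). Applying $\alpha^{-1}$ then yields $x\in M\rtimes_\alpha H$.

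The main obstacle is the reverse inclusion in the displayed identity: this is where the closedness of $H$ genuinely enters, and it is the only non-routine input. Everything else reduces to a clean commutation computation inside $M\ptvn B(\ldeux G)$ once the correct identification of $\ldeux(\alpha)$ with the standard representation space is in place.
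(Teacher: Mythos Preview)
Your approach is essentially the same as the paper's. Both arguments identify $\alpha(x)\in\End(\ldeux(\alpha))$ for $x\in M'\cap(M\rtimes_\alpha G)$, use the centrality hypothesis to obtain commutation with $1\ptvn\linf(H\setminus G)$, and then reduce to the identity $(1\ptvn\linf(H\setminus G))'\cap (M\rtimes_\alpha G)=M\rtimes_\alpha H$. The only presentational differences are that the paper packages the first step via the fixed-point description $M\rtimes_\alpha G=(M\ptvn B(\ldeux G))^{\alpha\otimes\Ad(\rho)}$ (so that $M'\cap(M\rtimes_\alpha G)=\End(\ldeux(\alpha))^{\alpha\otimes\Ad(\rho)}$), and for the key identity it cites \cite[Prop.~2.2]{boutonnet2016crossedproducts} directly rather than sketching it through an imprimitivity-type argument as you do.
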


\begin{proof}
  
Recall that \[\End(\ldeux(\alpha))=\alpha(M)'\cap (M^{\op}\otimes 1 )'\cap (B(\ldeux M)\ptvn B(\ldeux G))=\alpha(M)'\cap (M\ptvn B(\ldeux G))\] and that the crossed product $M\rtimes_{\alpha}G$ can be realized as the algebra of fixed points in $M\ptvn B(\ldeux G)$ under the action $\alpha\otimes \Ad(\rho)$ (cf \cite[X.1.22]{takesakiTheoryII}). Using the $\alpha\otimes \Ad(\rho)$ invariance of $\alpha(M)$, we obtain that $M'\cap(M\rtimes_{\alpha}G)=\End(\ldeux(\alpha))^{\alpha\otimes \Ad(\rho)}$. Then: 
    \begin{flalign*}
 M'&\cap(M\rtimes_{\alpha} G) = (\End(\ldeux(\alpha))\cap\Z(\End(\ldeux(\alpha)))')^{\alpha \otimes \Ad(\rho)}& &\\
            &\subset \alpha(M)'\cap (1\ptvn \Lp^{\infty}(H\setminus G))'\cap (M\rtimes_{\alpha} G)& &\\
            &\subset    (1\ptvn\linf(H\setminus G))' \cap M\rtimes_{\alpha} G&&\\
            &= M\rtimes_{\alpha} H &&\tag{\text{by \cite[Prop. 2.2]{boutonnet2016crossedproducts}}.}
    \end{flalign*}
\end{proof}

\begin{proof}[Proof of Theorem~\ref{thm:location of relative commutant in spectrally inner automorphisms}]
 Using Theorem~\ref{thm:fundamental result on continuity of weak containment} we show that if $H=\{g\in G\mid \ldeux(\id)\prec \ldeux(\alpha_g) \}$, then \[\cc\ptvn \Lp^{\infty}(G/H)\subset \Z(\End(\ldeux(\alpha))).\] Noticing that $H$ is a normal subgroup of $G$, lemma~\ref{lem:localization of relative commutant} concludes the proof of Theorem~\ref{thm:location of relative commutant in spectrally inner automorphisms}.

Set $\N\doteq \End(\ldeux(\alpha))$. Take $K$ a compact subset of $G$. We want to establish that $1\otimes 1_{KH}\in\Z(\N)$. Because $M$ is a factor, $\N\cap (1\ptvn\linf G)'=(1\ptvn\linf G)$ and consequently $\Z(\N)\subset 1\otimes\Lp^{\infty}(G)$. Consider $A\subset G$ such that $z(1\otimes 1_{KH})=1\otimes 1_A$, where $z(\cdot)$ is the central support in $\N$. Let $L$ be the support of $\mu_{\mid A}$, and take $g_0\in L$: then $\ldeux(\alpha_{g_0})\prec\ldeux(\alpha_{L})\sim \ldeux(\alpha_{\mid A})\sim \ldeux(\alpha_{\mid KH})$ (because $\ldeux(\alpha_{\mid A})$ and $\ldeux(\alpha_{\mid KH})$ are quasi-equivalent by construction). 

Now assume that $g_0\notin KH$. Take any $g\in KH$: $g_0^{-1}g\notin H$. By Theorem~\ref{thm:fundamental result on continuity of weak containment} there exists an open neighborhood $U_g$ of $g$ such that $\ldeux(\id)\nprec\ldeux(\alpha_{\mid g_0^{-1}U_g})$. Taking the tensor product by $\ldeux(\alpha_{g_0})$ we get an open covering $(U_g)_{g\in G}$ of $KH$ such that for all $g\in G$, $\ldeux(\alpha_{g_0})\nprec \ldeux(\alpha_{\mid U_g})$. Denote by $q:G\to G/H$ the quotient map, and recall that it is continuous and open. Hence, our open covering induces an open covering $q(K)\subset\bigcup_{g\in G}q(U_g)$. We extract a finite covering $q(K)\subset \bigcup_{i=1}^{n}U_{g_i}$, and pull it back: $KH\subset\bigcup_{i=1}^n U_{g_i}H$. For $1\leq i \leq n$, $\ldeux(\alpha_{g_0})\nprec\ldeux(\alpha_{\mid U_{g_i}})\sim\ldeux(\alpha_{\mid U_{g_i}H})$ (lemma~\ref{lem:weak equivalence H(HU) et H(U)}), hence $\ldeux(\alpha_{g_0})\nprec \bigoplus_{i=1}^n \ldeux(\alpha_{\mid U_{g_i}H})\sim \ldeux(\alpha_{\mid \bigcup_{i=1}^nU_{g_i}H})$, and therefore $\ldeux(\alpha_{g_0})\nprec \ldeux(KH)$, which is a contradiction.

Thus, $g_0\in KH$, so $L\subset KH$, $\mu(A\setminus KH)=0$ and finally $1\otimes 1_{KH}\in\Z(\N)$. 
\end{proof}

\begin{proof}[Proof of Corollary~\ref{thm:top outer action are str outer}]
 Here $\alpha:G\acts M$ is a topologically outer action of a locally compact group on a type $\II$ factor $M$. Denote by $H$ the subgroup of spectrally inner automorphisms in $G$. By Theorem~\ref{the:spectral gap caracterization of topological outerness}, $H$ only contains trace-scaling automorphisms. By Theorem~\ref{thm:location of relative commutant in spectrally inner automorphisms}, $M'\cap(M\rtimes_{\alpha} G)\subset M'\cap(M\rtimes_{\alpha} H)$.  By Connes-Takesaki's relative commutant theorem \cite[XII.1.7 in combination with XII.1.1]{takesakiTheoryII}, $M'\cap(M\rtimes_{\alpha} H)=\cc$, which completes the proof.
\end{proof}

\paragraph*{Acknowledgments.} We are very grateful to our advisor Amine Marrakchi for suggesting this question and for his help during this work. We thank Cyril Houdayer, Sven Raum and Stefaan Vaes for their comments on a first draft of this article.

\printbibliography
\end{document}